\documentclass{article}
\usepackage{graphicx} % Required for inserting images

\usepackage{amsmath}
\usepackage{amsfonts}
\usepackage{amssymb}
\usepackage{tikz-cd}
\usepackage{amsthm}
\usepackage{mathrsfs}
\usepackage{changepage}
\usepackage{setspace}
\usepackage{fancyhdr}

\DeclareMathOperator{\Ind}{Ind}

\DeclareMathOperator{\N}{\mathbb{N}}

\DeclareMathOperator{\tp}{tp}

\DeclareMathOperator{\blank}{\underline{\enspace}}

\def\Ind#1#2{#1\setbox0=\hbox{$#1x$}\kern\wd0\hbox to 0pt{\hss$#1\mid$\hss}
\lower.9\ht0\hbox to 0pt{\hss$#1\smile$\hss}\kern\wd0}

\def\ind{\mathop{\mathpalette\Ind{}}}

\def\notind#1#2{#1\setbox0=\hbox{$#1x$}\kern\wd0
\hbox to 0pt{\mathchardef\nn=12854\hss$#1\nn$\kern1.4\wd0\hss}
\hbox to 0pt{\hss$#1\mid$\hss}\lower.9\ht0 \hbox to 0pt{\hss$#1\smile$\hss}\kern\wd0}

\def\nind{\mathop{\mathpalette\notind{}}}

\newtheorem{theorem}{Theorem}[section]
\newtheorem{fact}[theorem]{Fact}
\newtheorem{lemma}[theorem]{Lemma}
\newtheorem{prop}[theorem]{Proposition}
\newtheorem{cor}[theorem]{Corollary}

\theoremstyle{remark}
\newtheorem{remark}[theorem]{Remark}

\theoremstyle{definition}
\newtheorem{definition}[theorem]{Definition}

\title{Lascar Rank in Expansions}
\author{Michael Lange}
\date{August 2026}

\begin{document}

\onehalfspacing
\setlength{\headheight}{14pt}   % or 15pt if you get warnings
\setlength{\headsep}{8pt} 
\newpage
\maketitle

\section{Introduction}

Our goal is to prove the following:

\begin{theorem}
    If $T$ is a supersimple theory and $T^+$ a supersimple expansion, then $\mathrm{SU}(T)\leq \mathrm{SU}(T^+)$
\end{theorem}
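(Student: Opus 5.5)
The plan is to compare ranks type by type. Work in a monster model $\mathfrak C^+$ of $T^+$; its $L$-reduct $\mathfrak C$ is a monster model of $T$. Write $\ind^+$ for forking independence in $T^+$ and $\ind$ for forking independence in $T$. For a real tuple $a$ and a small set $A$, write $\mathrm{SU}(a/A)$ for the rank of $\tp_L(a/A)$ and $\mathrm{SU}^+(a/A)$ for the rank of $\tp_{L^+}(a/A)$. Every $L$-type over a small set is realized in $\mathfrak C$, so it suffices to prove, by induction on the ordinal $\alpha$, the statement
\[
P(\alpha):\quad \text{for all } a,A,\ \ \mathrm{SU}(a/A)\geq\alpha \;\Longrightarrow\; \mathrm{SU}^+(a/A)\geq \alpha .
\]
The cases $\alpha=0$ and $\alpha$ a limit are immediate from the definition of SU-rank. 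So everything reduces to the successor step.

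\textbf{Successor step, assuming a key lemma.} Suppose $\mathrm{SU}(a/A)\geq\beta+1$. Choose $B\supseteq A$ with $a\nind_A B$ and $\mathrm{SU}(a/B)\geq\beta$. By $P(\beta)$, $\mathrm{SU}^+(a/B)\geq\beta$. It then suffices to know that $a\nind^+_A B$, since this gives $\mathrm{SU}^+(a/A)\geq\beta+1$. Taking contrapositives, the whole theorem reduces to the following.

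\emph{Key Lemma.} If $a\ind^+_A B$, then $a\ind_A B$; that is, $L^+$-nonforking implies $L$-nonforking.

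The rest of the plan proves this lemma, using that forking equals dividing in both simple theories together with Kim's lemma in $T$.

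\textbf{Proof of the Key Lemma.} Assume $a\ind^+_A B$.

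1. Take an $L^+$-Morley sequence $I=(B_i)_{i<\omega+\omega}$ in $\tp_{L^+}(B/A)$ with $B_\omega=B$ and $a\ind^+_A I$.

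2. Using $a\ind^+_A I$ and extracting, make $I$ $L^+$-indiscernible over $Aa$ while keeping $\tp_{L^+}(aB/A)$ fixed. In particular $I$ is $L$-indiscernible over $Aa$.

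3. Use a standard fact valid in any simple theory, applied in $T$. If $I$ is $L$-indiscernible over $A$, then the tail $(B_{\omega+i})_{i<\omega}$ is $L$-Morley over $A':=A\cup\{B_i:i<\omega\}$.

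4. The tail is $L$-Morley over $A'$, starts with $B$, and is $L$-indiscernible over $A'a$. By Kim's lemma in $T$, $\tp_L(a/A'B)$ does not divide, hence does not fork, over $A'$. So $a\ind_{A'}B$.

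5. It remains to descend from $A'$ to $A$, i.e.\ to show $a\ind_A I_{<\omega}$. Then transitivity gives $a\ind_A B$.

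\textbf{Handling the descent.} Step 5 has the same shape as the original problem, now with $I_{<\omega}$ in place of $B$. I would handle it by rerunning steps 1--4 with the sequence indexed by $\omega\cdot\omega$ (or a longer ordinal), together with compactness and finite character. The aim is to get $a\ind_{A\cup I_{<n}} I_{<n+1}$ for each $n$, and then chain these by transitivity.

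If the descent becomes circular, I would fall back on working over models. Over an $L^+$-model $M$, an $L^+$-coheir sequence is also an $L$-coheir sequence, and coheir sequences are Morley in simple theories. So over models steps 1--4 go through with no descent needed, and $L^+$-nonforking implies $L$-nonforking over $M$.

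To pass from $A$ to a model:

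- Fix an $L^+$-model $M\supseteq A$.
- Replace $a$ by some $a'\equiv^L_{AB}a$ with $a'\ind_A MB$. Since $\mathrm{SU}$ depends only on the $L$-type, $\mathrm{SU}(a/A)=\mathrm{SU}(a'/A)=\mathrm{SU}(a'/M)$.
- Use $\mathrm{SU}^+(a'/M)\leq\mathrm{SU}^+(a'/A)$.

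The induction must then be phrased so that it only ever asserts $\mathrm{SU}^+\geq\alpha$ for some realization of the given $L$-type. This is enough for the theorem, since $\mathrm{SU}(T)$ and $\mathrm{SU}(T^+)$ are suprema over all types.

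\textbf{Main obstacle.} I expect the hardest step to be exactly this interaction between $L$-types and $L^+$-types. A sequence that is $L$-indiscernible, or $L$-Morley, need not be $L^+$-indiscernible starting at the given tuple. The proof therefore has to produce witnessing sequences on the $L^+$ side and transfer only their $L$-properties, which is why the argument routes everything through $L^+$-Morley or coheir sequences and Kim's lemma in $T$.
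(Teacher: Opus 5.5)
\textbf{There is a genuine gap.} Your Key Lemma is false over arbitrary bases, and so is the statement $P(\alpha)$ you induct on. Here is a counterexample. Let $T$ say that $E$ is an equivalence relation with infinitely many classes, all infinite. Let $T^+$ add a predicate $R$ naming one class; both theories are $\omega$-stable. Take distinct $a,b\in R$. In $T$, the formula $E(x,b)$ divides over $\emptyset$ (use pairwise inequivalent conjugates of $b$), so $a\nind_{\emptyset} b$. But $\tp^+(a/b)$ is implied by $R(x)\wedge x\neq b$, which does not divide, so $a\ind^+_{\emptyset} b$. Correspondingly $\mathrm{SU}(a/\emptyset)=2$ while $\mathrm{SU}^+(a/\emptyset)=1$, which refutes $P(2)$. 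Your step 5 is itself an instance of the false lemma (with $I_{<\omega}$ in place of $B$), so no re-indexing can close it. The transfer from $\ind^+$ to $\ind$ only holds over relatively $\mathrm{acl}^+$-closed bases (the fact from Adler quoted in the paper). That is why your coheir argument over an $L^+$-model $M$ is correct.

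The fallback still leaves the real difficulty untouched. Even over $L^+$-models you cannot keep the same realization. Suppose instead that $T^+$ adds a predicate $Q$ meeting every $E$-class in exactly one point, $M\models T^+$, and $a\in Q$ lies in a class not represented in $M$. Then every $L^+$-model meeting the class of $a$ contains $a$, and indeed $\mathrm{SU}(a/M)=2$ while $\mathrm{SU}^+(a/M)=1$. So, as you concede, the induction can only assert that \emph{some} $L^+$-completion of a given $L$-type has large $\mathrm{SU}^+$-rank. With that formulation your successor step works; it is essentially the finite-rank argument sketched in the paper's introduction. But the limit step is no longer ``immediate from the definition''. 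Knowing that for every $\beta<\lambda$ some completion of $p$ has $\mathrm{SU}^+\geq\beta$ does not give a single completion of rank $\geq\lambda$, because SU-rank is not continuous. Taking suprema over all types does not help either. The successor step at $\lambda+1$ consumes one completion of a specific type of rank $\geq\lambda$, and this is already needed when $\mathrm{SU}(T)=\omega+1$.

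This limit step is the entire content of the paper, and your proposal offers no idea for it. The paper makes the forking data uniform: in the countable case via chains of forking extensions of reverse order type $\alpha$, built with the Buechler-style intermediate-extension lemma; in general via almost-uniform forking trees. With that uniformity, having a $T^+$-robust tree of a fixed shape becomes a type-definable condition, and compactness carries it through limit stages.
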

\noindent Which is a corollary of:
\begin{theorem}\label{HeaderThm2}
    If $T$ is a supersimple theory, $T^+$ a supersimple expansion, and $p$ a complete $T$-type over some parameters, then $p$ can be completed to a complete $T^+$ type $p^+$ whose SU-rank in $T^+$ is at least that of $p$ in $T$.
\end{theorem}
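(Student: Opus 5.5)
Induction on ordinals $\alpha$: I will show that for every $\alpha$, every set $A$ and every complete $T$-type $p\in S^T(A)$ with $\mathrm{SU}^T(p)\geq\alpha$, there is a completion $p^+\in S^{T^+}(A)$ of $p$ with $\mathrm{SU}^{T^+}(p^+)\geq\alpha$. Applied with $\alpha=\mathrm{SU}^T(p)$, this gives Theorem \ref{HeaderThm2}. Throughout I work in a monster model of $T^+$, whose $L$-reduct is a monster model of $T$. I write $\ind$ for forking independence in $T$ and $\ind^+$ for forking independence in $T^+$.

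The case $\alpha=0$ is trivial, since any completion works (for instance $\tp^{T^+}(a/A)$ for some $a\models p$). For limit $\alpha$ the condition "$\mathrm{SU}^{T^+}(p^+)\geq\beta$ for all $\beta<\alpha$" must be met by a \emph{single} completion, so pointwise induction is not enough. I will use compactness. Supersimplicity gives that the set of completions $q\in S^{T^+}(A)$ of $p$ with $\mathrm{SU}^{T^+}(q)\geq\beta$ is closed, because it is defined by the existence of a forking chain of length $\beta$, which can be expressed type-definably using $\omega$-chains of dividing formulas. These closed sets are nonempty by induction and decreasing in $\beta$, so the intersection is nonempty. I expect the closedness claim to need care, and a cleaner route is to fold the limit case into a stronger induction hypothesis. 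To that end I will strengthen the statement to: for every $\alpha\le\mathrm{SU}^T(p)$ there is a realization $a\models p$ with $\mathrm{SU}^{T^+}(a/A)\geq\alpha$. Equivalently, $\sup\{\mathrm{SU}^{T^+}(a/A): a\models p\}\geq \mathrm{SU}^T(p)$. I then still have to show that this supremum is attained. SU-ranks in a supersimple $T^+$ of types in a fixed sort over $A$ are bounded, and I expect the compactness argument above to show the sup is attained.

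The successor step is the heart of the proof. Suppose $\mathrm{SU}^T(p)\geq\alpha+1$. Then there are $B\supseteq A$ and $q\in S^T(B)$ extending $p$ with $q$ forking over $A$ (in $T$) and $\mathrm{SU}^T(q)\geq\alpha$. By induction, $q$ has a $T^+$-completion $q^+\in S^{T^+}(B)$ with $\mathrm{SU}^{T^+}(q^+)\geq\alpha$. Let $a\models q^+$. Then $a\nind_A B$ in $T$. The key point is that $T$-forking implies $T^+$-forking. An $L$-formula $\varphi(x,b)$ that divides over $A$ in $T$ witnesses dividing in $T^+$ as well: take an $A$-indiscernible sequence in $T^+$ extracted from the $T$-witness via Ramsey and compactness. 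The $L$-part of the extracted sequence keeps its $L$-type over $A$ and remains $k$-inconsistent, so $\varphi$ still divides. Forking formulas are finite disjunctions of dividing ones, so the same holds for forking. Hence $a\nind^+_A B$, and $\mathrm{SU}^{T^+}(a/A)\geq\mathrm{SU}^{T^+}(a/B)+1\geq\alpha+1$. Take $p^+=\tp^{T^+}(a/A)$, which extends $p$ because $a\models q\supseteq p$.

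The main obstacle I expect is the limit stage. My first attempt there is the compactness and closedness argument, using the fact that "$\mathrm{SU}\geq\beta$" is witnessed by finitely many forking formulas at each link of a chain. If that fails, I will instead observe that the successor step already handles any $\alpha=\mathrm{SU}^T(p)$ of the form $\beta+1$. For limit $\alpha$, the type $p$ has forking extensions of every rank $\beta<\alpha$, so the realizations constructed above satisfy $\mathrm{SU}^{T^+}(a/A)\geq\beta+1$ for all $\beta<\alpha$. I would then choose the $T^+$-type of maximal rank among the boundedly many completions of $p$, and argue by contradiction that its rank is at least $\alpha$.
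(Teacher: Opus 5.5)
\textbf{Limit stage: a genuine gap.} You correctly identify the limit stage as the obstacle, but your way around it fails, and this is exactly the difficulty the paper addresses.

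The set of completions $q\supseteq p$ in $S_{T^+}(A)$ with $\mathrm{SU}^+(q)\geq\beta$ is \emph{not} closed in general. The condition ``$\mathrm{SU}^+\geq\beta$'' asks for \emph{some} forking chain or tree, and the dividing formulas and dividing numbers are allowed to vary. It is therefore an infinite disjunction of type-definable conditions, not a type-definable condition; this is the failure of continuity of SU-rank. Here is a counterexample. Let $T$ be the theory of an infinite set. Let $T^+$ add disjoint unary predicates $P_n$ ($n<\omega$) with infinite complement, each carrying an equivalence relation with infinitely many infinite classes. The completions ``$x\in P_n$'' of the non-algebraic $T$-type over $\emptyset$ have $\mathrm{SU}^+$-rank $2$. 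They accumulate at the completion ``$x\notin P_n$ for all $n$'', which has $\mathrm{SU}^+$-rank $1$. So your decreasing-intersection argument already fails for $\beta=2$.

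Your fallback is circular. The ``strengthened'' hypothesis is just the original statement. Choosing a completion of maximal rank presupposes that the supremum of the ranks of completions is attained, which is what has to be proved. Having completions of rank $\geq\beta+1$ for every $\beta<\alpha$ gives no contradiction if all of them have rank $<\alpha$.

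The missing idea is to \emph{fix the dividing data}. Once a chain pattern or a shape $\sigma$ is fixed, require all parameter sets to be relatively $\mathrm{acl}^+$-closed, so that $T$-dividing automatically yields $T^+$-forking by Adler's criterion. Then the existence of such a configuration is type-definable in $T^+$ (Propositions \ref{ACLProp} and \ref{TypeDefProp}). Compactness further requires the data at smaller ranks to be restrictions of one coherent pattern. Producing such uniform patterns inside $T$ is the real work of the paper: Lemma \ref{EvenBetterBuechler} and Theorem \ref{ChainThm} for countable rank, and almost-uniform shapes via Propositions \ref{AlmostUnifExistenceProp} and \ref{StarProp2} in general.

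\textbf{Successor stage: wrong as written, but repairable.} It is not true that $\varphi(x,b)$ $T$-dividing over $A$ implies that $\varphi(x,b)$ itself $T^+$-divides over $A$. The $T^+$-indiscernible sequence you extract consists of realizations of $\tp(b/A)$. However, it begins with some $b'$ whose $T^+$-type over $A$ may differ from that of $b$, so you cannot conclude $a\nind^+_A B$.

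Concretely, let $T$ be a pure infinite set and let $T^+$ name a new constant $c$. Take $p$ to be the non-algebraic type over $\emptyset$, $q=\tp(c/c)$ and $a=c$. Then $x=c$ $T$-divides over $\emptyset$, but your construction outputs $\tp^+(c/\emptyset)$, which is algebraic.

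The fix, as in the paper's introduction, is to move $q$ first by a $T$-automorphism over $A$ sending $b$ to $b'$, and only then apply the induction hypothesis to the moved type. Alternatively, work over relatively $\mathrm{acl}^+$-closed bases and use Adler's criterion, as in N\"ubling's argument.
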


The proof of this turns out to require some work, in contrast to the corresponding fact for, say, Morley rank which follows straightforwardly from definitions. For the Morley rank of a formula, one verifies non-decreasing in expansions by observing that the patterns which witness an increase in rank (namely, some inclusion of infinitely many disjoint definable sets inside another) are preserved in expansions. Then one verifies the claim for types by viewing complete $T$-types as partial $T^+$-types and taking a completion which does not introduce any formulas of lower rank; here it is key that Morley rank is a continuous rank on types.

The first half of this argument, preserving the patterns that witness a +1 increase in rank, is slightly more complicated in the case of SU-rank, but still brief. We will give two versions of the argument shortly. The second half does not translate in any reasonable way to SU-rank since SU-rank is not in general continuous; in other words, the SU-rank on types does not correspond to a well-behaved rank notion on formulas. This is the main problem we will have to work around in this paper.

In the case of Theorem \ref{HeaderThm2} that $p$ has finite rank, non-continuity of SU-rank poses no problem, and one need only be careful about turning instances of $T$-dividing into instances of $T^+$-dividing. One can argue inductively as follows: begin with some $T$-type $p$ of rank $\geq n+1$ over some parameters $A$. Find, in $T$, a forking extension $q$ of rank $\geq n$. Let $\varphi(x,b)$ be a formula in this extension $T$-dividing over $A$. By taking a sufficiently long $T$-indiscernible sequence in the $T$-type of $b$, we may extract an infinite $T^+$-indiscernible sequence in the $T$-type of $b$ concentrating on a single $T^+$-type, say that of some $b'$; the instances of $\varphi$ along this subsequence will still be jointly inconsistent. Hence after moving by a $T$-automorphism over $A$, we may replace $q$ by another extension of $p$ which contains a formula $\varphi(x,b')$ now $T^+$-dividing over $A$. By the inductive hypothesis, $q$ has a completion $q^+$ to a complete $T^+$-type of rank $\geq n$. Now the restriction of $q^+$ to $A$ has rank $\geq n+1$ in $T^+$, and extends the original type $p$.

Another argument for the finite rank case is presented by N\"ubling in [N]. There, the point is to choose the parameter sets in such a way that instances of $T$-dividing are automatically instances of $T^+$-dividing with no need for moving the parameters by an automorphism. The key condition can be found in [A], which says that if $A$ is a parameter set algebraically closed in the sense of $(T^+)^{eq}$ then $T^+$-independence over $A$ implies $T$-independence over $A$.

Neither argument gives an indication of how to proceed at limit stages: at stage $\omega$, the inductive hypothesis grants that $p$ has completions in $T^+$ of all finite ranks, but this is not enough to give a single completion of $p$ in $T^+$ with rank $\omega$, because SU-rank is not continuous.

But we observe that the condition on parameter sets in the latter argument is, roughly speaking, type-definable in $T^+$. Then, after committing to specific dividing formulas and dividing numbers for a tree (with, say, foundation rank $\alpha$) of $T$-forking extensions of a $T$-type $p$, we obtain a type-definable condition whose satisfiability would witness the rank of some completion of $p$ being $\geq \alpha$ in the expansion. Our idea is to use compactness to prove satisfiability of these conditions at limit stages. For this, it is necessary that the corresponding tree of formulas and dividing numbers be sufficiently uniform; more precisely, we will require that certain portions of this tree pattern actually arise from \textit{chains} of forking extensions in the reverse order of $\omega$. So we reduce to a problem entirely within $T$, of finding sufficiently uniform patterns of forking extensions for a given type.

In the case of types with countable SU-rank (so in particular, whenever the theory $T$ itself is countable), the uniformity we find has a particularly simple statement:

\begin{theorem}\label{HeaderThm3}
    Let $p$ be a complete type of countable rank $\alpha$ in a supersimple theory. Then $p$ has a chain of forking extensions in the reverse order type of $\alpha$.
\end{theorem}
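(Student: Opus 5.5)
We read ``a chain of forking extensions in the reverse order type of $\alpha$'' as follows. We want types $(p_\beta)_{\beta\le\alpha}$ with $p_\alpha=p$, each $p_\beta$ over a parameter set $B_\beta$, and $\mathrm{SU}(p_\beta)\ge\beta$. For $\gamma<\beta$ we require $B_\beta\subseteq B_\gamma$ and that $p_\gamma$ is a forking extension of $p_\beta$. Going down the indices we go down in rank and up in parameters. The plan is to fix a realization $a\models p$ over $A$ and construct all the $p_\beta$ as $\mathrm{tp}(a/B_\beta)$, by induction on the countable ordinal $\alpha$.

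\textbf{Successor and gluing steps.} If $\alpha=\beta+1$, use the definition of SU-rank to pick $B\supseteq A$ with $a\nind_A B$ and $\mathrm{SU}(a/B)\ge\beta$. Apply the inductive hypothesis to $\mathrm{tp}(a/B)$ (trimming its rank to exactly $\beta$ beforehand by a further nonforking/forking adjustment if needed). Then put $p$ on top; transitivity of forking makes every member of the chain a forking extension of $p$.

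The same gluing handles concatenation. Suppose $\mathrm{SU}(a/B)\ge\beta$ and we already have a chain over $B$ of type $\beta^{*}$. Suppose also that between $A$ and $B$ we have a chain of type $\delta^{*}$ whose members all have rank $\ge\beta+\zeta$ for the appropriate $\zeta$. Then stacking the second chain on top of the first gives a chain of type $(\beta+\delta)^{*}$. So the whole problem reduces to limit ordinals $\lambda$, and there, by this gluing, to producing an $\omega^{*}$-chain ``at the top''.

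\textbf{Limit step, the main obstacle.} Let $\alpha=\lambda$ be a limit and fix a cofinal sequence $\lambda_0<\lambda_1<\cdots$, which exists because $\lambda$ is countable; this is exactly where countability is used. It suffices to find $B_0\supseteq B_1\supseteq\cdots\supseteq A$ with the following properties:
\begin{itemize}
\item $a\nind_{B_{n+1}}B_n$ for each $n$;
\item $\mathrm{SU}(a/B_n)\ge\lambda_n$;
\item $\mathrm{SU}(a/B_{n+1})$ is large enough that the inductive hypothesis, applied between $B_{n+1}$ and $B_n$, fills the interval $[\lambda_n,\lambda_{n+1})$.
\end{itemize}
The first attempt is to take $B_n=A\cup\{c_m:m\ge n\}$, where each $c_m$ is chosen with $\mathrm{SU}(a/Ac_m)\ge\lambda_m+1$ (or larger, as needed for the filling) and $a\nind_A c_m$. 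We then need to decide two things: which elements $c_m$ to use, and how to make them interact so that adding all the $c_m$ for $m\ge n+1$ does not destroy the forking of $a$ with $c_n$, nor the rank lower bound. I would choose the $c_m$ recursively by increasing $m$. At each stage I would take $c_m$ independent over $Aa$ from $c_{<m}$, realizing a nonforking extension of $\mathrm{tp}(c_m/Aa)$.

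The hard point is that the ranks of $a$ over the infinite tails $B_n$ are not controlled by finite data, since SU-rank is not continuous. I would try to get the lower bound on $\mathrm{SU}(a/B_n)$ from the Lascar inequalities together with this independence. Concretely, I would first bound $\mathrm{SU}(a c_{\ge n}/A)$ from below and $\mathrm{SU}(c_{\ge n}/A)$ from above. If the Lascar inequalities do not give this directly, the fallback is to replace each $c_m$ by a single element coding a whole finite chain segment. One can then also use the free choice of $\lambda_n$ to absorb any rank loss from the ``natural sum'' error terms. Forking of $a$ with $c_n$ over $B_{n+1}$ should follow because $c_n\ind_{Aa}c_{>n}$ while $a\nind_A c_n$. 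I expect securing this limit-stage rank bound to be the main difficulty.
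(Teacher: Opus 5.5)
There is a genuine gap: the limit step, where all the content of the theorem lies, is not carried out, and the mechanism you propose for it does not work. Your gluing step and third bullet need a \emph{relative} statement: a chain of prescribed length running from $\mathrm{tp}(a/B_{n+1})$ down to $\mathrm{tp}(a/B_n)$, so that it can be stacked on the chain below $B_n$. Your inductive hypothesis only says that a type of rank $\ge\beta$ has \emph{some} chain of type $\beta^*$ below it, with no control over which types it passes through. So it cannot be ``applied between'' two given types. Moreover, the construction $B_n=A\cup\{c_m:m\ge n\}$ with $c_m\ind_{Aa}c_{<m}$ gives no lower bound on $\mathrm{SU}(a/B_n)$, and the claimed implication ($c_n\ind_{Aa}c_{>n}$ and $a\nind_A c_n$ give $a\nind_{Ac_{>n}}c_n$) is false. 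For instance, in $\mathrm{DCF}_0$ take $a$ generic (rank $\omega$), $e_m$ independent generics over $a$, and $c_m=(e_m,\delta^{m+1}a-e_ma)$. Then $\mathrm{SU}(a/c_m)=m+1$, $a\nind c_m$, and $c_m\ind_a c_{<m}$. But eliminating $\delta a$ between the two first-order relations obtained from the equations coded by $c_{n+1},c_{n+2},c_{n+3}$ shows $a\in\mathrm{dcl}(c_{n+1}c_{n+2}c_{n+3})$. Hence $\mathrm{SU}(a/B_{n+1})=0$ and $a\ind_{B_{n+1}}c_n$. Rank bounds on each $\mathrm{tp}(a/Ac_m)$ separately say nothing about the joint effect of the $c_m$. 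The Lascar inequalities cannot rescue this either, since they would need an upper bound on $\mathrm{SU}(c_{\ge n}/A)$ that you do not have.

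The missing idea is the generalized Buechler lemma, Lemma~\ref{EvenBetterBuechler}. If $q\supseteq p$ is over an $|A|^+$-saturated model and $\mathrm{SU}(q)+\omega^\alpha\le\mathrm{SU}(p)$, then some intermediate $p\subseteq r\subseteq q$ has $\mathrm{SU}(r)\approx_\alpha\mathrm{SU}(q)+\omega^\alpha$. It is proved using a finite piece $c$ of $\mathrm{Cb}(q)$, quantitative symmetry, and some $b\in N$ with $\mathrm{SU}(c/Ab)=\omega^\alpha$. This yields exactly the relative statement you lack, Proposition~\ref{IntermediateProp}: a chain below an extension $q$ can be extended upward toward $p$ by $\omega^\alpha$ further steps whenever the rank gap allows. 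The chain is thus built from the bottom up, by repeatedly inserting intermediate types between $p$ and the current top. Each insertion first requires passing to a nonforking extension over a saturated model, so the successive chains are not literally nested. Consequently you cannot keep one realization $a$ with a nested family of parameter sets. Instead the paper records only the dividing data (chain patterns), transports them along nonforking extensions (Proposition~\ref{ChainExtProp}), and glues them at limit stages by compactness (Proposition~\ref{LimitChains}). Neither the intermediate-type lemma nor the compactness gluing is present in your plan.
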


This is proved by generalizing a result of Buechler [B] which states that two points which are immediately successive in the fundamental order of a superstable theory must have immediately successive U-ranks. The argument we use here does not seem to work in the case of uncountable ranks, so for these we consider trees proper, and it turns out that the amount of uniformity needed to prove Theorem \ref{HeaderThm2} is considerably less than what is obtained in Theorem \ref{HeaderThm3}. We will treat the countable case first, as the notation can be kept much simpler while the main idea of the general case is still present.

\section{Chain Patterns}
To start, we work in $T$ a simple theory.
\begin{definition}
    A \textit{chain pattern} of rank $\alpha$ is a sequence of pairs $\{(\varphi_\beta(x,y_
    \beta),k_\beta)\}_{\beta<\alpha}$ where the $\varphi_\beta(x,y_\beta)$ are bipartitioned formulas all having the same object tuple $x$ and the $k_\beta$ are positive integers. If $C$ is a chain pattern of rank $\alpha$, we will say that $C$ \textit{is a chain pattern for/over} a complete type $p$ if $p$ has a chain of forking extensions $\{q_\beta\}_{\beta<\alpha}$ such that for all $\gamma<\beta<\alpha$, $q_\gamma$ is a forking extension of $q_\beta$ as witnessed by $k_\gamma$-dividing of an instance of $\varphi_\gamma$. If there is a type of maximum index $\beta$ in the chain (i.e. if $\alpha=\beta+1$) then we also require that the forking of $q_\beta$ over $p$ is witnessed by $k_\beta$-dividing of an instance of $\varphi_\beta$. When such a chain of extensions exists, we may also say that $p$ \textit{has a forking chain of type} $C$.
    
    We should emphasize that chain patterns are well-ordered by $\supseteq$ rather than $\subseteq$; in particular, existence of infinite chain patterns is not contradicted by supersimplicity.
\end{definition}
If $C,D$ are chain patterns of rank $\alpha,\beta$ respectively, then we may form their concatenation $C^\frown D$ which is a chain pattern of rank $\alpha+\beta$.

If a chain pattern $C$ is an initial segment of a chain pattern $D$ then we write $C\trianglelefteq D$.

If $C$ is a chain pattern of rank $\alpha$ then for any $\beta<\alpha$ we define its restriction $C_\beta$ by restricting the domain of the sequence to $\beta$.

The first useful property about chain patterns is that they are apt for compactness arguments:
\begin{prop}\label{LimitChains}
    Let $C$ be a chain pattern of limit rank $\alpha$. Then a type $p\in S(A)$ has a forking chain of type $C$ iff it has a forking chain of type $C_\beta$ for all $\beta<\alpha$.
\end{prop}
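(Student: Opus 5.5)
The forward direction is immediate: if $\{q_\beta\}_{\beta<\alpha}$ is a forking chain of type $C$ for $p$, then for each $\beta<\alpha$ the restricted chain $\{q_\gamma\}_{\gamma<\beta}$ is a forking chain of type $C_\beta$. Only the final clause needs a check. When $\beta=\delta+1$ we need $q_\delta$ to fork over $p$ via $k_\delta$-dividing of an instance of $\varphi_\delta$. Since $\alpha$ is a limit, $\delta+1<\alpha$, so the original chain already says $q_\delta$ forks over $q_{\delta+1}$ this way. I expect to read ``witnessed by'' as saying the relevant instance $\varphi_\delta(x,b)\in q_\delta$ $k_\delta$-divides over the domain of $q_{\delta+1}$. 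Then it also $k_\delta$-divides over the smaller set $A$: an indiscernible sequence over the larger set is indiscernible over $A$. So $q_\delta$ forks over $p$ as required.

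For the converse I will use compactness. First fix the data. Introduce new variables $x$ and sets of variables $Y_\beta$ for $\beta<\alpha$, where $Y_\beta$ names the parameters of $q_\beta$; the domains are increasing, so $Y_\beta\subseteq Y_\gamma$ for $\gamma<\beta$. Include among them distinguished tuples $y_\beta$ for the instance of $\varphi_\beta$. Since the $q_\beta$ form a decreasing chain under $\supseteq$ with $q_0$ the largest, it is cleanest to realize a single $a$ and write $q_\beta=\tp(a/B_\beta)$ with $A\subseteq\cdots\subseteq B_\beta\subseteq\cdots\subseteq B_0$. I can bound the sizes by $|A|+|T|$ by downward Löwenheim--Skolem reasoning, or simply keep the cardinalities from the finite-stage witnesses. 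The whole chain is then described by a partial type $\Sigma$ over $A$ in the variables $x$ and $(Y_\beta)_{\beta<\alpha}$. It consists of four parts:
\begin{enumerate}
\item $\tp(x/A)=p$;
\item $\varphi_\gamma(x,y_\gamma)$ for every $\gamma<\alpha$;
\item $y_\gamma\subseteq Y_\gamma$;
\item ``$\varphi_\gamma(x,y_\gamma)$ $k_\gamma$-divides over $Y_{\gamma+1}$'' for every $\gamma<\alpha$.
\end{enumerate}
The last condition is type-definable: by compactness it is equivalent to the existence of a $Y_{\gamma+1}$-indiscernible sequence in $\tp(y_\gamma/Y_{\gamma+1})$ along which the instances are $k_\gamma$-inconsistent. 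I will add new variables for an $\omega$-indexed such sequence and axiomatize its indiscernibility and inconsistency.

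Any finite subset of $\Sigma$ mentions only finitely many indices, all below some $\beta<\alpha$, with $\beta$ chosen a successor so that the index $\gamma+1$ is also covered. A forking chain of type $C_\beta$ then gives a realization after renaming the unused variables arbitrarily. Here the same final-clause remark is needed: for the top index $\delta$ with $\beta=\delta+1$, the finite-stage chain only guarantees dividing over $A$, not over some $B_{\delta+1}$. So I will simply take $B_{\delta+1}=A$ in that realization, which satisfies the relevant conjuncts. By compactness $\Sigma$ is realized, and the resulting $\tp(a/B_\beta)$ form a forking chain of type $C$ extending $p$.

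The main obstacle is setting up $\Sigma$ precisely. The parameter sets must be genuine variables with a consistent nesting across all $\beta$, and the dividing of $\varphi_\gamma$ must be over the \emph{next} set $Y_{\gamma+1}$, not over $A$. At limit indices $\lambda<\alpha$ the set $Y_\lambda$ is just another variable block, and dividing is only asserted over successor-indexed sets. Care is also needed that the parameter-set sizes are fixed in advance, by choosing witnesses of bounded size $\le|A|+|T|$ using that dividing is witnessed over countable-ish subsets. Once that bookkeeping is right, the compactness step is routine.
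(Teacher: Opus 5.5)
This is the same compactness argument as the paper's, and your forward direction is right, including the check of the final clause via the fact that $k$-dividing over a set implies $k$-dividing over any subset.

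The bookkeeping you single out as the main obstacle is self-inflicted, and the justification you give for it does not work. The bound $|A|+|T|$ ``using that dividing is witnessed over countable-ish subsets'' is unjustified: the domains strictly decrease along the chain, so $B_0$ has at least $|\alpha|$ elements. More to the point, no bound is needed, since each finite fragment of $\Sigma$ only requires you to interpret finitely many variables.

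The paper avoids all of this by using only the variables $x$ and $(y_\beta)_{\beta<\alpha}$. Its $\pi_C$ says that $x\models p$, that $\varphi_\beta(x,y_\beta)$ holds, and that $\varphi_\beta(x,y_\beta)$ $k_\beta$-divides over $A\cup\{y_\gamma\}_{\gamma>\beta}$. A realization $(a,(b_\beta)_\beta)$ gives the chain $q_\beta:=\tp(a/A(b_\gamma)_{\gamma\geq\beta})$. Conversely, any chain of type $C_\beta$ satisfies the restriction of $\pi_C$ to $(x,(y_\gamma)_{\gamma<\beta})$. This uses the same monotonicity of dividing that you already used, because $A\cup\{b_\delta\}_{\delta>\gamma}$ lies inside the domain of $q_{\gamma+1}$.

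In that restriction, the condition on the top index $\delta$ of $C_{\delta+1}$ is literally $k_\delta$-dividing over $A$, so the final clause is matched automatically. No convention like $B_{\delta+1}=A$ is needed. In your setup that convention would also require interpreting the mentioned variables of $Y_{\delta+1}$ by elements of $A$, which need not be possible.
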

\begin{proof}
    The forward implication is obvious. For the reverse implication, we first observe that existence of a forking chain of type $C$ for $p$ is equivalent to satisfiability of a certain partial type defined over $A$. Namely, we define $\pi_C(x, (y_\beta)_{\beta<\alpha})$ as follows: $\pi_C$ specifies that $x$ satisfies $p$, and that for all $\beta<\alpha$, the formula $\varphi_\beta(x,y_\beta)$ holds and the formula $\varphi_\beta(x,y_\beta)$ $k_\beta$-divides over the set $A\cup\{y_\gamma\}_{\gamma>\beta}$. Now it is clear that if $(a,(b_\beta)_{\beta<\alpha})$ are a realization of this partial type, then taking $q_\beta:=\tp(a/A(b_\gamma)_{\gamma\geq \beta})$ gives a forking chain of type $C$ over $p$. Conversely if $(q_\beta)_{\beta<\alpha}$ is a forking chain of type $C$ over $p$, then if we take $b_\beta$ to be the parameters from $q_\beta$ such that $q_\beta$ contains $\varphi_\beta(x,b_\beta)$ which $k_\beta$-divides over $q_{\beta+1}$, and $a$ to be any realization of $q_0$, then $(a,(b_\beta)_{\beta<\alpha})$ realize $\pi_C$.

    Similarly, we may see that for any $\beta<\alpha$, the restriction of $\pi_C$ to the variables $(x,(y_\gamma)_{\gamma<\beta})$ is a partial type satisfiable iff $p$ has a forking chain of type $C_\beta$; we may call this restricted partial type $\pi_{C_\beta}$. Now since $\alpha$ is a limit ordinal, every finite part of $\pi_C$ is contained in some $\pi_{C_\beta}$, and all of these are satisfiable by hypothesis, so $\pi_C$ is satisfiable by compactness.
\end{proof}

We recall the following standard fact about dividing:
\begin{fact}\label{Standard}
    Let $A$ be a set of parameters and $a,b$ (possibly infinite) tuples. Then the following are equivalent:
    \begin{enumerate}
        \item $\tp(a/Ab)$ does not divide over $A$
        \item Given an $A$-indiscernible sequence $I$ starting with $b$, there exists $a'\equiv_{Ab}a$ such that $I$ is $Aa'$-indiscernible
        \item Given an $A$-indiscernible sequence $I$ starting with $b$, there exists $J\equiv_{Ab} I$ such that $J$ is $Aa$-indiscernible.
    \end{enumerate}
\end{fact}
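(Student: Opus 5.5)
We prove the cycle of implications $(1)\Rightarrow(3)\Rightarrow(2)\Rightarrow(1)$, working in a monster model and using compactness, Ramsey and the standard extraction of indiscernibles. Throughout, let $I=(b_i)_{i<\omega}$ with $b_0=b$ be $A$-indiscernible. Since dividing is witnessed by single formulas, infinite tuples cause no trouble: every condition below is a partial type in the relevant variables, and compactness applies.

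For $(1)\Rightarrow(3)$, let $p(x,b)=\tp(a/Ab)$. Because $\tp(a/Ab)$ does not divide over $A$, the set $\bigcup_{i}p(x,b_i)$ is consistent; otherwise some formula of $p(x,b)$, conjoined finitely, would be inconsistent along the indiscernible sequence $I$, and hence would divide. Pick $a^*$ realizing it. We may lengthen $I$ to an $A$-indiscernible sequence of large length first; since $\bigcup_i p(x,b_i)$ only sees finitely many terms at a time, consistency persists along the long sequence. Now extract an $Aa^*$-indiscernible sequence $J'=(b'_i)_{i<\omega}$ that is finitely based on the long sequence over $Aa^*$. Then $J'\equiv_A I$ by $A$-indiscernibility, and $b'_i\models p(a^*,\cdot)$, i.e.\ $a^*b'_0\equiv_A ab$. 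Choose an automorphism $\sigma$ over $A$ with $\sigma(a^*b'_0)=ab$ and set $J=\sigma(J')$. Then $J$ is $Aa$-indiscernible, starts with $b$, and $J\equiv_A I$; combined with first elements agreeing, this gives $J\equiv_{Ab}I$ as sequences. The main technical point is exactly this step: one must check that the extracted sequence realizes the EM-type over $A$ and keeps the type $p$ on each element. Both hold because these are properties of finite subsequences that are preserved by extraction.

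For $(3)\Rightarrow(2)$, take $J\equiv_{Ab}I$ that is $Aa$-indiscernible, and let $\tau$ be an automorphism over $Ab$ with $\tau(J)=I$. Then $a'=\tau(a)$ satisfies $a'\equiv_{Ab}a$, and $I$ is $Aa'$-indiscernible.

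For $(2)\Rightarrow(1)$, suppose $\varphi(x,b)\in\tp(a/Ab)$ divides over $A$. Then there is an $A$-indiscernible sequence $I$ starting with $b$ such that $\{\varphi(x,b_i)\}_{i}$ is $k$-inconsistent; the standard extraction lets us take such an indiscernible witness starting with $b$. By (2) there is $a'\equiv_{Ab}a$ with $I$ being $Aa'$-indiscernible. So $\models\varphi(a',b)$, and by indiscernibility $\models\varphi(a',b_i)$ for all $i$, which contradicts $k$-inconsistency. Since forking here is just dividing of the type, this proves (1).
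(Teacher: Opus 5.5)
The paper does not prove this statement; it only recalls it as a standard fact. Your argument is the standard proof, and it is correct.

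In $(1)\Rightarrow(3)$ you show that $\bigcup_i p(x,b_i)$ is consistent, extract an $Aa^*$-indiscernible sequence, and move it by an automorphism over $A$. You also check correctly that $J\equiv_A I$ together with $J$ and $I$ having the same first element gives $J\equiv_{Ab}I$. The directions $(3)\Rightarrow(2)$ and $(2)\Rightarrow(1)$ are also fine.

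Three small remarks:
\begin{itemize}
\item Lengthening $I$ and using an Erd\H{o}s--Rado extraction is more than you need. Ramsey plus compactness applied to $I$ over $Aa^*$ already gives an $Aa^*$-indiscernible sequence realizing the EM-type of $I$ over $Aa^*$. Since $I$ is $A$-indiscernible, that EM-type contains $\tp(I/A)$, and it also contains $p(a^*,y_i)$ for each $i$.
\item You only treat $\omega$-indexed $I$, while the statement allows an arbitrary infinite order type. The same argument works verbatim for any order type.
\item The closing phrase about forking is unnecessary: (1) is literally a statement about dividing, which is exactly what you proved.
\end{itemize}
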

From this we also have:
\begin{fact}\label{DivFact}
    Let $\varphi(x,b)$ be a formula with parameters, let $A$ be a set of parameters, and let $a$ be a (possibly infinite) tuple such that $\tp(a/Ab)$ does not divide over $A$. Then if $\varphi(x,b)$ $k$-divides over $A$, it also $k$-divides over $Aa$.
\end{fact}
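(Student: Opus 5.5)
The plan is to use the characterization of dividing by indiscernible sequences together with clause (3) of Fact \ref{Standard}. The first step is to replace the witness to $k$-dividing over $A$ by an $A$-indiscernible one. By definition there is a sequence $(b_i)_{i<\omega}$ of realizations of $\tp(b/A)$ such that $\{\varphi(x,b_i)\}_{i<\omega}$ is $k$-inconsistent. The property ``every $k$ of the $\varphi(x,y_i)$ are jointly inconsistent'' is expressed by formulas in the variables $(y_i)$ over $A$ (indeed over $\emptyset$). Hence by the standard Ramsey and compactness extraction we obtain an $A$-indiscernible sequence $I=(b'_i)_{i<\omega}$ realizing the EM-type of $(b_i)$ over $A$. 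Then each $b'_i\equiv_A b$, and $\{\varphi(x,b'_i)\}$ is still $k$-inconsistent. Applying an automorphism over $A$ sending $b'_0$ to $b$, we may assume $I$ starts with $b$.

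Next I invoke Fact \ref{Standard}. Since $\tp(a/Ab)$ does not divide over $A$ and $I$ is $A$-indiscernible starting with $b$, clause (3) gives $J\equiv_{Ab} I$ that is $Aa$-indiscernible. Because $J\equiv_{Ab}I$, the sequence $J$ also starts with $b$. It remains $k$-inconsistent for $\varphi$, since that is a property of its type over $\emptyset$, which $J$ shares with $I$.

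Finally I conclude. $J=(b''_i)$ is $Aa$-indiscernible and $b''_0=b$, so every $b''_i$ realizes $\tp(b/Aa)$. Thus $J$ is a sequence in $\tp(b/Aa)$ along which the instances of $\varphi$ are $k$-inconsistent, which is exactly $k$-dividing of $\varphi(x,b)$ over $Aa$. Nothing here uses finiteness of $a$, since clause (3) allows infinite tuples.

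The only step needing any care is the first, namely that the dividing witness can be taken indiscernible with the \emph{same} $k$. I expect no real obstacle there, because $k$-inconsistency is a first-order property of each $k$-subsequence and so passes to the EM-type.
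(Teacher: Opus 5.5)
Your proof is correct and is the same argument as the paper's. Both take an $A$-indiscernible witness $I$ starting with $b$, apply clause (3) of Fact~\ref{Standard} to obtain an $Aa$-indiscernible $J\equiv_{Ab}I$, and note that $J$ still starts with $b$ and remains $k$-inconsistent; your only addition is the Ramsey-and-compactness justification that the witness can be taken $A$-indiscernible with the same $k$, a standard step the paper simply assumes.
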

\begin{proof}
    Let $I$ be an $A$-indiscernible sequence starting with $b$ such that the corresponding instances of $\varphi$ are $k$-inconsistent. Then by Fact \ref{Standard}, there is $J\equiv_{Ab} I$ such that $J$ is $Aa$-indiscernible. Then $J$ still begins with $b$, and the corresponding instances of $\varphi$ are still $k$-inconsistent, so we see that $\varphi(x,b)$ $k$-divides over $Aa$.
\end{proof}

Now we give a slight refinement of the usual proof that nonforking extensions do not drop Lascar rank:
\begin{prop}\label{ChainExtProp}
    If $p\in S(A)$ has a forking chain of type $C$ and $q\in S(B)$ is a nonforking extension of $p$, then $q$ also has a forking chain of type $C$.
\end{prop}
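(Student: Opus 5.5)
We follow the usual proof that nonforking extensions preserve Lascar rank, but keep track of the dividing formulas and dividing numbers throughout.

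First reduce to the case where $\tp(a/B)$ does not divide over $A$ for the realization we use. Fix a forking chain $(q_\beta)_{\beta<\alpha}$ of type $C$ over $p$. As in the proof of Proposition \ref{LimitChains}, it is witnessed by a realization $(a,(b_\beta)_{\beta<\alpha})$ of $\pi_C$. Here $a\models q_0$, so $a\models p$. Put $\bar b=(b_\beta)_{\beta<\alpha}$; then $\varphi_\beta(a,b_\beta)$ holds and $\varphi_\beta(x,b_\beta)$ $k_\beta$-divides over $A\cup\{b_\gamma\}_{\gamma>\beta}$.

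Now let $c\models q$. Since $q$ does not fork over $A$ and $c\equiv_A a$, we can move everything by an $A$-automorphism and assume $a=c$. Then $\tp(a/B)$ does not fork over $A$. By extension for nonforking in simple theories, choose $\bar b'\equiv_{Aa}\bar b$ with $\tp(\bar b'/Aa B)$ not forking over $Aa$. Replace $\bar b$ by $\bar b'$; this does not affect the fact that $(a,\bar b)$ realizes $\pi_C$, since $\pi_C$ is over $A$. Transitivity and symmetry then give $B\ind_A a\bar b$. In other words, $\tp(B/A a \bar b)$ does not fork over $A$. I expect this step to be the main one, because it is where simplicity is used.

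Next, show that $(a,\bar b)$ realizes $\pi_C$ with $B$ in place of $A$; that is, each $\varphi_\beta(x,b_\beta)$ $k_\beta$-divides over $B\cup\{b_\gamma\}_{\gamma>\beta}$. Write $A_\beta = A\cup\{b_\gamma\}_{\gamma>\beta}$. From $B\ind_A a\bar b$, monotonicity and base monotonicity give $B\ind_{A_\beta} b_\beta$, so $\tp(B/A_\beta b_\beta)$ does not divide over $A_\beta$. Apply Fact \ref{DivFact} with base $A_\beta$, parameter $b_\beta$ and the tuple $B$. Since $\varphi_\beta(x,b_\beta)$ $k_\beta$-divides over $A_\beta$, it also $k_\beta$-divides over $A_\beta B$, which is what we wanted.

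Finally, set $q'_\beta := \tp(a/B(b_\gamma)_{\gamma\geq\beta})$. This is a chain of extensions of $q=\tp(a/B)$. For each $\beta$, $q'_\beta$ contains $\varphi_\beta(x,b_\beta)$, which $k_\beta$-divides over $B(b_\gamma)_{\gamma>\beta}$. When $\alpha=\beta+1$, this is exactly the statement that $q'_\beta$ forks over $q$ via $\varphi_\beta$. So $q'_\beta$ is a forking extension of the type indexed above it, witnessed by $k_\beta$-dividing of an instance of $\varphi_\beta$ as the definition requires, and $q$ has a forking chain of type $C$.
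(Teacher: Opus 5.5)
Your proof is correct, but it is organized differently from the paper's. The paper inducts on the rank of $C$. At a successor stage it moves only the parameter set $D$ of the last extension $p'$ (the one forking directly over $p$), so that $D\ind_{Aa}B$. From this it gets $Da\ind_A B$, transfers the single formula $\varphi_\alpha(x,d)$ to $B$ via Fact \ref{DivFact}, and applies the inductive hypothesis to the nonforking extension $\tp(a/BD)$ of $p'$ for the rest of the chain. At limit stages it uses the compactness argument of Proposition \ref{LimitChains}.

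You do the same independence move once, for the whole parameter tuple $\bar b$, obtaining $B\ind_A a\bar b$. Base monotonicity then gives $B\ind_{A_\beta}b_\beta$ at every level at once, so Fact \ref{DivFact} applies level by level with base $A_\beta$. This removes both the induction and the compactness step and treats limit and successor lengths the same way. It also gives a more explicit conclusion: after an $Aa$-conjugation of $\bar b$, the same pair $(a,\bar b)$ realizes $\pi_C$ with $B$ in place of $A$.

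One small point when you check the definition: it requires forking of $q'_\gamma$ over $q'_\beta$ for all $\gamma<\beta$, not only for the next index. This follows because $B(b_\delta)_{\delta\ge\beta}\subseteq B(b_\delta)_{\delta>\gamma}$ and dividing over a set implies dividing over any subset.

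What the paper's one-step-at-a-time scheme buys is reuse. It carries over essentially verbatim to forking trees in Proposition \ref{TreeExtProp}. There, types on different branches are not nested and a forking tree need not be jointly consistent, so there is no single realization $a$ and parameter tuple $\bar b$ to move all at once.
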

\begin{proof}
    We induct on the rank of $C$. If the rank of $C$ is 0 then $C$ is empty and there is nothing to prove.

    If the rank of $C$ is $\alpha+1$, then write $C=C_\alpha^\frown\{(\varphi_\alpha,k_\alpha)\}$. Take some forking chain of type $C$ over $p$, and let $p'\in S(D)$ be the $\alpha$th type in the chain; i.e. the extension of $p$ which forks over $A$ by $k_\alpha$-dividing of an instance of $\varphi_\alpha$. By taking an $A$-automorphism of a large saturated model moving a realization of $p'$ to a realization of $q$, we may replace $p'$ by a type jointly consistent with $q$ (note that $p'$ will still have its own chain of type $C_\alpha$ and will still contain an instance of $\varphi_\alpha$ which $k_\alpha$-divides over $A$). Let $a\models p'\cup q$. There exists $D'\equiv_{Aa} D$ such that $D'\ind_{Aa} B$. Then after replacing $p'$ by $\tp(a/D')$ we may further assume that $D\ind_{Aa}B$. Since we have $a\ind_A B$, we may conclude by transitivity that $Da\ind_A B$. In particular, by monotonicity, base monotonicity, and normality, we have $D\ind_A B$ and $a\ind_D BD$. Hence if we let $q':=\tp(a/BD)$, then $q'$ is a nonforking extension of $p'$. Since $p'$ has a forking chain of type $C_\alpha$, it follows by induction that $q'$ has one as well. Moreover, we have that $p'$ contains an instance $\varphi_\alpha(x,d)$ with $d\in D$, which $k_\alpha$-divides over $A$. If $\overline{b}$ is a sequence enumerating $B$ then by the earlier $D\ind_A B$ and symmetry, we have that $\tp(\overline{b}/D)$ does not divide over $A$, and in particular $\tp(\overline{b}/Ad)$ does not divide over $A$. Then by Fact \ref{DivFact}, $\varphi_\alpha(x,d)$ $k_\alpha$-divides over $B$, and this formula is also in $q'$. Then taking $q'$ together with its own forking chain of type $C_\alpha$, we find a forking chain of type $C$ for $q$.

    If the rank of $C$ is a limit $\alpha$, then $p$ has a forking chain of type $C_\beta$ for all $\beta<\alpha$ and by induction so does $q$. Then by Proposition \ref{LimitChains}, $q$ has a forking chain of type $C$.
\end{proof}

\section{Finding Intermediate Extensions}
For this section, we work in $T^{eq}$ for a supersimple theory $T$. Given two ordinals $\beta,\gamma$ and a third ordinal $\alpha$, let us write $\beta\approx_\alpha \gamma$ to mean that the Cantor normal forms of $\beta,\gamma$ have the same terms for all exponents $\geq \alpha$. Observe that $\approx_\alpha$ is an equivalence relation on ordinals. Also, if $\gamma\leq \delta\leq \beta$ and $\gamma\approx_\alpha\beta$ then $\gamma\approx_\alpha\delta\approx_\alpha\beta$.

Let us also recall the following quantitative refinement of forking symmetry (see, e.g. [W] for details):
\begin{fact}
    Let $T$ be a simple theory. If $\mathrm{SU}(a/Ab)<\infty$ and $\mathrm{SU}(a/A)\geq \mathrm{SU}(a/Ab)+\omega^\alpha\cdot k$ then also $\mathrm{SU}(b/A)\geq \mathrm{SU}(b/Aa)+\omega^\alpha\cdot k$.
\end{fact}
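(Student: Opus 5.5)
The approach is to derive the statement from the Lascar inequalities for SU-rank in a simple theory, plus bookkeeping with Cantor normal forms. The relation $\approx_\alpha$ just introduced is the right language for that bookkeeping. Write $\beta=\mathrm{SU}(a/Ab)$, $\gamma=\mathrm{SU}(b/Aa)$ and $\delta=\mathrm{SU}(b/A)$.

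First I dispose of degenerate cases. If $\delta=\infty$ the conclusion is trivial under the convention $\infty\geq$ anything. If $\gamma=\infty$ then $\delta=\infty$ by monotonicity. So assume $\delta<\infty$. Then $\mathrm{SU}(ab/A)<\infty$, since $\beta<\infty$ and the upper Lascar inequality bounds $\mathrm{SU}(ab/A)$ by the natural sum $\beta\oplus\delta$. Next I apply the two Lascar inequalities in both orders:
\[
\mathrm{SU}(a/A)+\gamma\;\leq\;\mathrm{SU}(ab/A)\;\leq\;\delta\oplus\beta .
\]
Inserting the hypothesis $\mathrm{SU}(a/A)\geq \beta+\omega^\alpha\cdot k$ gives
\[
\beta+\omega^\alpha\cdot k+\gamma\;\leq\;\delta\oplus\beta .
\]

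The core step is to cancel $\beta$ from this inequality, up to the terms of exponent $\geq\alpha$. I expand $\beta$, $\gamma$ and $\delta$ in Cantor normal form and compare coefficients from the top exponent downwards. Ordinal addition on the left absorbs the low terms of $\beta$ into $\omega^\alpha\cdot k$ and absorbs lower terms into $\gamma$. The natural sum on the right simply adds coefficients exponent by exponent. From this I expect to read off $\delta\geq\gamma+\omega^\alpha\cdot k$.

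I expect this cancellation to be the main obstacle. The mixed ordinary/natural sums mean the naive inequality only controls $\delta$ up to $\approx_\alpha$-type precision. Terms of $\gamma$ with exponent below the leading exponent of $\beta$ might get swallowed, so this route may only deliver the weaker conclusion $\delta\oplus\beta\geq\beta\oplus(\gamma+\omega^\alpha k)$ instead of what we want.

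If the arithmetic route leaves such a gap, I fall back on a direct induction on $\gamma$. I show that for every $\varepsilon<\gamma+\omega^\alpha\cdot k$ the type $\tp(b/A)$ has a forking extension of rank $\geq\varepsilon$, treating two ranges of $\varepsilon$ separately.
\begin{enumerate}
\item For $\varepsilon\geq\gamma$, the extension $\tp(b/Aa)$ itself forks over $A$, because $\tp(a/Ab)$ forks over $A$ (as $k\geq1$). It therefore already supplies the increment needed above $\gamma$. For the finer increments inside $\omega^\alpha\cdot k$, I induct on $(\alpha,k)$: I pick a forking extension $\tp(a/Ac)$ with $\mathrm{SU}(a/Ac)\geq\beta+\omega^{\alpha'}\cdot m$ for suitable $\alpha'<\alpha$ or $m<k$, arranging $c\ind_{Aa}b$, and apply the inductive hypothesis over the base $Ac$.
\item For $\varepsilon<\gamma$, I take a forking extension $\tp(b/Aac)$ of rank $\geq\varepsilon$ with $c\ind_{Aa} b$ chosen suitably. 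I then apply the inductive hypothesis over the base $Ac$, using symmetry and transitivity to check that the rank hypothesis on $a$ is preserved over $Ac$.
\end{enumerate}
In this fallback the delicate point is verifying that the new base still satisfies $\mathrm{SU}(a/Ac)\geq\mathrm{SU}(a/Abc)+\omega^\alpha\cdot k$. I would secure this by choosing $c$ independent from $a$ over $Ab$ wherever possible.
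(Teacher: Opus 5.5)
\textbf{There is a genuine gap.} The paper gives no proof of this Fact; it is quoted from [W]. Your argument therefore has to stand on its own, and as written it does not.

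\textbf{The arithmetic route fails twice.} First, your lower bound $\mathrm{SU}(a/A)+\gamma\le\mathrm{SU}(ab/A)$ has the summands in the wrong order. The Lascar inequality gives $\gamma+\mathrm{SU}(a/A)\le\mathrm{SU}(ab/A)$, and your version is false in general. For example, take $b$ generic in a structureless infinite fibre $\pi^{-1}(a)$ over a generic $a$ of rank $\omega$. Then $\mathrm{SU}(b/a)=1$, but $\mathrm{SU}(ab)=\mathrm{SU}(b)=\omega<\omega+1=\mathrm{SU}(a)+\mathrm{SU}(b/a)$.

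Second, and decisively, even the correct inequalities cannot yield the conclusion by Cantor-normal-form bookkeeping. Consider the values $\beta=\omega^2$, $\mathrm{SU}(a/A)=\omega^2+2$, $\gamma=\omega$, $\delta=\omega+1$, $\mathrm{SU}(ab/A)=\omega^2+\omega+1$. They satisfy all four Lascar inequalities and the strict drop $\delta>\gamma$ forced by $b\nind_A a$. The hypothesis holds with $\alpha=0$, $k=2$, and yet $\delta<\gamma+2$. This is exactly the absorption you feared. The bookkeeping does work when $\beta<\omega^{\alpha+1}$, but not in general. So the Fact carries information beyond the Lascar inequalities, and this route cannot be repaired.

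\textbf{The fallback is a sketch missing its key step.} Everything therefore rests on it, and its case split is inverted. $\tp(b/Aa)$ is itself a forking extension of rank $\gamma$, so it already handles every $\varepsilon\le\gamma$; item 2 is superfluous. All the content lies in $\gamma<\varepsilon<\gamma+\omega^\alpha k$.

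There, item 1 only gives $\mathrm{SU}(b/A)\ge\mathrm{SU}(b/Ac)\ge\gamma+\omega^{\alpha'}m$. For $k=1$, $\alpha>0$ this suffices by continuity of $\gamma+(\cdot)$, even with $c=\emptyset$. But for $\alpha=0$ it stalls at $\gamma+k-1$, and for $k\ge2$ it stalls at $\gamma+\omega^\alpha(k-1)$.

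To reach $\gamma+k$ when $\alpha=0$, you need a $c$ satisfying all of the following:
\begin{itemize}
\item $\mathrm{SU}(b/Aac)\ge\gamma$;
\item the gap $\mathrm{SU}(a/Ac)\ge\mathrm{SU}(a/Abc)+(k-1)$;
\item $b\nind_A c$.
\end{itemize}
For $\alpha>0$ you similarly need $\tp(b/Ac)$ to lose rank over $A$ while enough of the $a$-gap survives over $Ac$.

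Nothing in your sketch produces such a $c$, and the conditions you impose do not force it. Let $a=a_1a_2$ with $a_1\ind_A a_2$, let $b$ be interalgebraic with $a_1$ over $A$, and let $c$ fork with $a_2$ over $A$ but satisfy $c\ind_A a_1$. Then $c\ind_{Aa}b$ and $\tp(a/Ac)$ forks over $A$, yet $b\ind_A c$. Constructing a suitable $c$, or a substitute argument, is the actual content of the Fact, and your proposal leaves it as ``choose $c$ suitably''.
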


Now we may prove a refinement of a result from [B]:

\begin{lemma}\label{EvenBetterBuechler}
    Let $p\in S(A)$ be a complete type and $q\in S(N)$ an extension of $p$, with $N$ being $|A|^+$-saturated. Suppose also that $\mathrm{SU}(q)+\omega^\alpha\leq\mathrm{SU}(p)$. Then there is an intermediate extension $p\subseteq r\subseteq q$ with $\mathrm{SU}(r)\approx_\alpha \mathrm{SU}(q)+\omega^\alpha$. %In particular $r\in S(Ab)$ for some finite tuple $b\in N$.
\end{lemma}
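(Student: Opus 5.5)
We will argue by a minimality argument in the spirit of Buechler's original proof, using the quantitative symmetry Fact to move rank drops between $a$ and the parameters. Fix $a\models q$ and write $\beta:=\mathrm{SU}(q)$. By local character in the supersimple theory $T$, there is a finite tuple $b_0\in N$ such that $q$ does not fork over $Ab_0$, so $\mathrm{SU}(a/Ab_0)=\beta$. Now consider the ordinals $\mathrm{SU}(a/AB)$ as $B$ ranges over small subsets of $N$ (of size $\leq|A|$, say) with $\mathrm{SU}(a/AB)\geq \beta+\omega^\alpha$. This set is nonempty because $B=\emptyset$ qualifies, by the hypothesis $\mathrm{SU}(p)\geq\beta+\omega^\alpha$. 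Let $\gamma$ be its minimum, attained at some $B$, and put $r:=\tp(a/AB)$. Then $p\subseteq r\subseteq q$ and $\gamma\geq\beta+\omega^\alpha$. The whole task is to show $\gamma\approx_\alpha\beta+\omega^\alpha$.

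Suppose this fails. Since $\gamma\geq\beta+\omega^\alpha$ and the two ordinals disagree on some Cantor term of exponent $\geq\alpha$, we should get $\gamma\geq \beta+\omega^\alpha\cdot 2$, or at least a gap of the form $\gamma\geq\mathrm{SU}(a/ABb_0)+\omega^{\alpha'}\cdot k$ for suitable $\alpha'\geq\alpha$ and $k$ large enough to leave room strictly between. This is an ordinal-arithmetic check using $\mathrm{SU}(a/ABb_0)=\beta$. Enlarging $B$ by $b_0$ keeps everything small, and we have $\mathrm{SU}(a/AB)\geq\mathrm{SU}(a/ABb_0)+\omega^\alpha\cdot 2$. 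The symmetry Fact then gives
$$\mathrm{SU}(b_0/AB)\geq\mathrm{SU}(b_0/ABa)+\omega^\alpha\cdot 2.$$

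Next we split this drop in half on the $b_0$ side. We want a small $c$ such that $\mathrm{SU}(b_0/ABc)$ lies between the two quantities, dropping by at least $\omega^\alpha$ on each side. We would obtain this by taking a forking chain for $\tp(b_0/AB)$ that descends toward $\tp(b_0/ABa)$, or more simply by choosing $c\subseteq \mathrm{acl}(ABa)$-related parameters along a Lascar-inequality decomposition. Then we realize the needed type of $c$ over $ABb_0$ inside $N$ using $|A|^+$-saturation, keeping the relevant type of $a$ over $ABb_0c$ fixed via an automorphism over $ABb_0$. Applying the symmetry Fact again in the reverse direction should yield
$$\beta+\omega^\alpha\leq\mathrm{SU}(a/ABc)<\gamma.$$
Since $Bc$ is a small subset of $N$, this contradicts the minimality of $\gamma$.

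The main obstacle is this last step. It requires producing an intermediate $c$ with controlled rank drops on both sides, and simultaneously ensuring $c$ can be found in $N$ without disturbing $\tp(a/N)$. The symmetry Fact only gives inequalities of the form $\geq\dots+\omega^\alpha\cdot k$, so the rank bookkeeping must be done with Lascar inequalities. I would first try to take $c$ realizing in $N$ a type over $ABb_0$ chosen so that $a\ind_{ABc}b_0$ fails only by a controlled amount.
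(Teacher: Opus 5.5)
\textbf{There is a genuine gap, at exactly the step you flag.} Your preliminary reduction is fine: the ordinal arithmetic giving $\gamma\geq\beta+\omega^\alpha\cdot 2$, the value $\mathrm{SU}(a/ABb_0)=\beta$, and the symmetry step all hold. But what remains is to find $c\in N$ with $a\nind_{AB}c$ and $\mathrm{SU}(a/ABc)\geq\beta+\omega^\alpha$. That is essentially the lemma again, applied to $\tp(a/AB)$ and $q$, so the minimality framing does not reduce the difficulty.

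Your proposed construction of $c$ also targets the wrong quantities. Placing $\mathrm{SU}(b_0/ABc)$ between $\mathrm{SU}(b_0/AB)$ and $\mathrm{SU}(b_0/ABa)$ gives neither of the two things you need:
\begin{itemize}
\item The strict drop below $\gamma$ requires forking between $a$ and $c$ over $AB$, not between $b_0$ and $c$.
\item By symmetry, the lower bound requires $\mathrm{SU}(b_0/ABc)\geq\mathrm{SU}(b_0/ABca)+\omega^\alpha$, which involves $\tp(b_0/ABca)$ rather than $\tp(b_0/ABa)$.
\end{itemize}
Parameters from $\mathrm{acl}(ABa)$ are not in $N$, and in general none of them has intermediate rank. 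Transporting $c$ into $N$ by an automorphism over $ABb_0$ moves $a$ off $q$. For $c'\in N$ realizing $\tp(c/ABb_0)$ you only know $a\ind_{Ab_0}c'$. Since nonforking extensions are not unique in a simple theory, this does not recover the configuration $(a,c)$ you arranged.

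\textbf{The missing idea is the canonical base.} The paper's argument runs as follows.
\begin{itemize}
\item By elimination of hyperimaginaries, take $c$ to be a finite tuple from $\mathrm{Cb}(q)\subseteq N$ such that $q$ does not fork over $Ac$.
\item Quantitative symmetry gives $\mathrm{SU}(c/A)\geq\omega^\alpha$, so there is a finite $b$ with $\mathrm{SU}(c/Ab)=\omega^\alpha$. This is a condition on $\tp(b/Ac)$ alone, so by saturation $b$ can be found in $N$ without any interference with $a$.
\item Then $a\ind_{Ac}b$ is automatic. Also $a\nind_{Ab}c$, since otherwise transitivity gives $a\ind_{Ab}N$, forcing $c\in\mathrm{Cb}(q)\subseteq\mathrm{acl}(Ab)$, contrary to $\mathrm{SU}(c/Ab)=\omega^\alpha$.
\item Symmetry then gives $\mathrm{SU}(c/Aab)<\omega^\alpha$. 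Bounding $\mathrm{SU}(ac/Ab)$ by the Lascar inequalities in both orders yields $\mathrm{SU}(a/Ab)\approx_\alpha\mathrm{SU}(q)+\omega^\alpha$ directly, with no minimality argument.
\end{itemize}

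An arbitrary $b_0$ over which $q$ does not fork cannot play the role of $c$. It may contain a component irrelevant to $q$. Then $\mathrm{SU}(b_0/ABb)$ can be large even though $a\ind_{ABb}N$, in which case $\tp(a/ABb)$ has rank only $\beta$.
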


\begin{proof}
    By elimination of hyperimaginaries, we have $\mathrm{Cb}(q)\subseteq N$ and by supersimplicity, some finite tuple $c\in \mathrm{Cb}(q)$ such that $q$ does not fork over $c$. If $a$ is any realization of $q$, we have $\mathrm{SU}(a/Ac)+\omega^\alpha\leq\mathrm{SU}(a/A)$. Then by quantitative forking symmetry we have $\mathrm{SU}(c/A)\geq \mathrm{SU}(c/Aa)+\omega^\alpha$. In particular, we have that $\mathrm{SU}(c/A)\geq \omega^\alpha$ and so there exists a finite tuple $b$ such that $\mathrm{SU}(c/Ab)=\omega^\alpha$. By saturation, it may be assumed that $b\in N$. Let $a\models q$ so in particular $a\ind_{Ac}b$.

We claim that $a\nind_{Ab} c$. Indeed, suppose not: then by transitivity we have that $a\ind_{Ab} N$ and so $\mathrm{Cb}(a/N)\subseteq \mathrm{acl}(Ab)$; in particular, $c\in \mathrm{acl}(Ab)$ which contradicts $\mathrm{SU}(c/Ab)=\omega^\alpha$.

Now by symmetry, $c\nind_{Ab} a$ which in particular means $\mathrm{SU}(c/Aab)<\omega^\alpha$.

Let us consider two ways of bounding $\mathrm{SU}(ac/Ab)$ using the Lascar inequalities. First:

\[\mathrm{SU}(c/Aab)+\mathrm{SU}(a/Ab)\leq\mathrm{SU}(ac/Ab)\leq\mathrm{SU}(c/Aab)\oplus\mathrm{SU}(a/Ab)\]

This inequality and the fact that $\mathrm{SU}(c/Aab)<\omega^\alpha$ yield:

\[\mathrm{SU}(a/Ab)\approx_\alpha\mathrm{SU}(ac/Ab)\]

Now the other:

\[\mathrm{SU}(a/Abc)+\mathrm{SU}(c/Ab)\leq\mathrm{SU}(ac/Ab)\leq\mathrm{SU}(a/Abc)\oplus\mathrm{SU}(c/Ab)\]

The assumption that $\mathrm{SU}(c/Ab)=\omega^\alpha$ implies that 
\[\mathrm{SU}(a/Abc)\oplus\mathrm{SU}(c/Ab)\approx_\alpha\mathrm{SU}(a/Abc)+\mathrm{SU}(c/Ab)=\mathrm{SU}(q)+\omega^\alpha\]

and so the previous inequality yields:
\[\mathrm{SU}(ac/Ab)\approx_\alpha\mathrm{SU}(q)+\omega^\alpha\]

which together with our first $\approx_\alpha$ equivalence gives:

\[\mathrm{SU}(a/Ab)\approx_\alpha\mathrm{SU}(q)+\omega^\alpha\]

Hence we may take $r=\tp(a/Ab)$ to conclude the claim.
\end{proof}

Using this lemma, we work towards building chain patterns over types which are as long as the Lascar rank will allow:

\begin{prop}\label{IntermediateProp}
    Suppose $p$ has an extension $q$ with a chain pattern $C$ of rank $\beta$. Suppose also that $\mathrm{SU}(q)+\omega^\alpha\leq \mathrm{SU}(p)$ where $\alpha<\omega_1$. Then $p$ also has some chain pattern $D\trianglerighteq C$ of rank $\beta+\omega^\alpha$.
\end{prop}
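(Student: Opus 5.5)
The plan is to induct on $\alpha<\omega_1$, using Lemma \ref{EvenBetterBuechler} to produce an intermediate type $r$ whose rank sits a full $\omega^\alpha$ above $\mathrm{SU}(q)$, and Proposition \ref{LimitChains} to pass through limits.

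\emph{Preliminary reductions.} First, I may assume $q\in S(N)$ with $N$ an $|A|^+$-saturated model. Indeed, let $q'$ be a nonforking extension of $q$ to such an $N$ containing the domain of $q$. Then $\mathrm{SU}(q')=\mathrm{SU}(q)$, $q'$ still extends $p$, and by Proposition \ref{ChainExtProp} $q'$ still has a forking chain of type $C$.

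Second, I record a gluing observation that is used throughout. Suppose $q$ has a forking chain $(q_\gamma)_{\gamma<\beta}$ of type $C$, and $q=s_\beta\subseteq s_{\beta+1}\subseteq\dots$ is a chain of types going up toward $p$, each forking over the next. Then the concatenated sequence is a forking chain for $p$ of the concatenated pattern. This is because dividing over a larger parameter set implies dividing over a smaller one: an indiscernible sequence over the larger set is indiscernible over the smaller. So the requirement that $q_\gamma$ contain an instance $k_\gamma$-dividing over the domains of all later types reduces to dividing over the domain of $q_{\gamma+1}$.

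\emph{Base case.} When $\alpha=0$, $\mathrm{SU}(q)<\mathrm{SU}(p)$ gives that $q$ forks over $A$, hence some formula of $q$ $k$-divides over $A$. Appending this pair to $C$ gives $D$ of rank $\beta+1$. The required top clause holds, and the old clauses for $q$'s chain are unaffected.

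\emph{Inductive step.} For $\alpha>0$, apply Lemma \ref{EvenBetterBuechler} to get $p\subseteq r\subseteq q$ with $\mathrm{SU}(r)\approx_\alpha\mathrm{SU}(q)+\omega^\alpha$. Write $\mathrm{SU}(q)=S+\epsilon$ with $S$ the part of the Cantor normal form with exponents $\geq\alpha$ and $\epsilon<\omega^\alpha$. Then $\mathrm{SU}(r)\geq S+\omega^\alpha$, so $\mathrm{SU}(r)\geq\mathrm{SU}(q)+\delta$ for every $\delta<\omega^\alpha$; symmetrically $\mathrm{SU}(r)<S+\omega^\alpha\cdot 2$. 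Since $\alpha$ is countable, $\omega^\alpha$ has cofinality $\omega$. Fix $\alpha_n<\alpha$ with $\delta_n:=\omega^{\alpha_0}+\dots+\omega^{\alpha_{n}}$ cofinal in $\omega^\alpha$: take $\alpha_n=\alpha-1$ constantly if $\alpha$ is a successor, or $\alpha_n\uparrow\alpha$ if $\alpha$ is a limit. I then build coherent patterns $C=D_{-1}\trianglelefteq D_0\trianglelefteq D_1\trianglelefteq\cdots$, with $D_n$ of rank $\beta+\delta_n$ a pattern for some type $t_n$, where $r\subseteq t_n\subseteq q$ and $t_n$ is the top type of the chain. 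Given $t_{n}$, I apply the induction hypothesis with $r$ in the role of $p$ and $t_{n}$ (carrying pattern $D_{n}$) in the role of $q$, for the exponent $\alpha_{n+1}$. This yields $D_{n+1}\trianglerighteq D_n$ of rank $\beta+\delta_{n+1}$, which is a successor-type rank, so there is a top type $t_{n+1}$.

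\emph{Finishing.} The union $D$ of the $D_n$ has rank $\beta+\omega^\alpha$, a limit. Every restriction $D_\gamma$ is an initial segment of some $D_n$, hence is a pattern for $r$. By Proposition \ref{LimitChains}, $r$ has a forking chain of type $D$. By the gluing observation, forking over the domain of $r$ implies forking over $A$, and there is no top clause at limit rank, so this chain is also a forking chain for $p$.

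\emph{Main obstacle.} The hard step is justifying the hypothesis $\mathrm{SU}(t_n)+\omega^{\alpha_{n+1}}\leq\mathrm{SU}(r)$ at each stage, since the induction hypothesis as stated gives no control over the rank of the top type. I would first try strengthening the statement being proved by induction to: the top type $t$ of a successor-rank chain produced satisfies $\mathrm{SU}(t)\approx_{\alpha}\mathrm{SU}(q)$ up to adding less than $\omega^\alpha$, i.e. $\mathrm{SU}(t)<S+\omega^\alpha$. With this, $\mathrm{SU}(t_n)<S+\omega^{\alpha'}$ for some $\alpha'<\alpha$ (or a bound of the form $S+\delta_n$). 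That leaves room $\omega^{\alpha_{n+1}}$ below $S+\omega^\alpha\leq\mathrm{SU}(r)$. The strengthened form should be propagated through the base case, where the top is $\mathrm{tp}$ over $Ab$ as in the lemma and the rank bound is computable by the Lascar inequalities, and through the inductive step.
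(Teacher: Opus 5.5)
There is a genuine gap, and it is the step you flag as the main obstacle. Your induction hypothesis, applied with $r$ in the role of $p$, only tells you that $D_{n+1}$ is a pattern \emph{over $r$}. It produces no type of controlled rank carrying $D_{n+1}$, and $r$ itself cannot serve as $t_{n+1}$, since $\mathrm{SU}(r)+\omega^{\alpha_{n+2}}\le\mathrm{SU}(r)$ fails.

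Your bookkeeping does not fill this in either. First, $\beta+\delta_{n+1}$ is a limit ordinal whenever $\alpha_{n+1}>0$, so there is no top type. Second, even when the rank is a successor $\mu+1$, the top type $q_\mu$ of a chain for $r$ carries only $(D_{n+1})_\mu$, not $D_{n+1}$.

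The strengthening you suggest does not rescue the argument. If $t$ carries a pattern of rank $\beta+\omega^\alpha$, then $\mathrm{SU}(t)\geq\beta+\omega^\alpha$, so the bound $\mathrm{SU}(t)<S+\omega^\alpha$ is false in general. For instance, take $q$ algebraic and $C$ empty: you would need $\omega^\alpha\le\mathrm{SU}(t)<\omega^\alpha$.

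The missing idea is to apply Lemma \ref{EvenBetterBuechler} at \emph{every} stage of the $\omega$-iteration, not just once. In the case $\alpha+1$, the paper builds $q\supseteq r_1\supseteq r_2\supseteq\cdots\supseteq p$ with $\mathrm{SU}(r_k)\approx_\alpha\mathrm{SU}(q)+\omega^\alpha\cdot k$. At each step it does three things:
\begin{enumerate}
\item it moves $r_k$ to a nonforking extension over a saturated model, which preserves $C_k$ by Proposition \ref{ChainExtProp};
\item it applies the lemma between $p$ and $r_k$ to obtain $r_{k+1}$ with $\mathrm{SU}(r_{k+1})\approx_\alpha\mathrm{SU}(r_k)+\omega^\alpha$;
\item it applies the induction hypothesis with $r_{k+1}$ as $p$ and $r_k$ as $q$.
\end{enumerate}
Thus each new pattern $C_{k+1}$ lives over a type whose rank is known, which is exactly what makes the next step legal. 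The union is then handled by Proposition \ref{LimitChains}. The limit case is the same, using $\gamma_k\uparrow\alpha$, $\mathrm{SU}(r_k)\approx_{\gamma_k}\mathrm{SU}(q)+\omega^{\gamma_k}$, and the identity $\mathrm{SU}(r_k)+\omega^{\gamma_{k+1}}=\mathrm{SU}(q)+\omega^{\gamma_{k+1}}\le\mathrm{SU}(p)$.

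Alternatively, your scheme can be repaired with a correct strengthening. Require the statement to also output a type $t\supseteq p$ carrying $D$ with $\mathrm{SU}(t)\approx_\alpha\mathrm{SU}(q)+\omega^\alpha$; this gives $\mathrm{SU}(t)<S+\omega^\alpha\cdot 2$, not $\mathrm{SU}(t)<S+\omega^\alpha$. The base case then also uses the lemma, with exponent $0$. Applied at level $\alpha_{n+1}<\alpha$, this keeps $\mathrm{SU}(t_n)<S+\omega^\alpha\le\mathrm{SU}(r)$ throughout, by additive indecomposability of $\omega^\alpha$. Your single application of the lemma then supplies $t:=r$ at level $\alpha$.
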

\begin{proof}
    We will prove this by induction on $\alpha$. In the case $\alpha=0$ we have by assumption that $q$ forks over $p$. This is witnessed by some dividing data $(\varphi_\beta, k_\beta)$ and we see that $C^\frown \{(\varphi_\beta, k_\beta)\}$ is then a chain pattern for $p$ of rank $\beta+1$.

    Now take the case $\alpha+1$. Say $p\in S(A)$. By assumption $\mathrm{SU}(q)+\omega^{\alpha+1}\leq\mathrm{SU}(p)$, and so also $\mathrm{SU}(q)+\omega^\alpha<\mathrm{SU}(p)$. By Proposition \ref{ChainExtProp}, we are free to replace $q$ by a nonforking extension and so assume $q\in S(N)$ where $N$ is $|A|^+$-saturated. By Lemma \ref{EvenBetterBuechler} we find a type $r_1$ with $p\subseteq r_1\subseteq q$ and $\mathrm{SU}(r_1)\approx_\alpha\mathrm{SU}(q)+\omega^\alpha$. In particular, $\mathrm{SU}(q)+\omega^\alpha\leq \mathrm{SU}(r_1)$ and so by the inductive hypothesis, we may find a chain pattern $C_1\trianglerighteq C$ over $r_1$ (hence over $p$) where the rank of $C_1$ is $\beta+\omega^\alpha$. Suppose we have defined chain patterns $C_k\trianglerighteq\dots\trianglerighteq C_1\trianglerighteq C$ over $p$ where each $C_i$ has rank $\beta+\omega^\alpha\cdot i$, and $C_k$ is also a chain pattern over some $r_k$ an extension of $p$ satisfying $\mathrm{SU}(r_k)\approx_\alpha\mathrm{SU}(q)+\omega^\alpha\cdot k$. Then after replacing $r_k$ by a nonforking extension we may find an intermediate type $p\subseteq r_{k+1}\subseteq r_k$ with $\mathrm{SU}(r_{k+1})\approx_\alpha\mathrm{SU}(r_k)+\omega^\alpha=\mathrm{SU}(q)+\omega^\alpha\cdot(k+1)$. In particular, $\mathrm{SU}(r_k)+\omega^\alpha\leq \mathrm{SU}(r_{k+1})$ and so by the inductive hypothesis we find $C_{k+1}\trianglerighteq C_{k}$ a chain pattern over $r_{k+1}$ with rank $\beta+\omega^\alpha\cdot(k+1)$.

    This completes the definition of an ascending $\omega$-chain of chain patterns over $p$, $C\trianglelefteq C_{1}\trianglelefteq C_2\trianglelefteq\dots$. The union of these chain patterns is a chain pattern $D$ of rank $\beta+\omega^{\alpha+1}$ and it follows by Proposition \ref{LimitChains} that $p$ still has a forking chain of type $D$. 

    Now take the case $\alpha$ a limit. Since $\alpha$ is countable, we can find $\gamma_1,\gamma_2,\dots$ a strictly increasing sequence of ordinals $<\alpha$ whose supremum is $\alpha$. Then $\mathrm{SU}(q)+\omega^\alpha\leq \mathrm{SU}(p)$ implies $\mathrm{SU}(q)+\omega^{\gamma_1}\leq\mathrm{SU}(p)$. As before we may assume $q$ is over a saturated model and so obtain $r_1$ with $p\subseteq r_1\subseteq q$ and $\mathrm{SU}(r_1)\approx_{\gamma_1}\mathrm{SU}(q)+\omega^{\gamma_1}$. By inductive hypothesis, we find $C_{1}\trianglerighteq C$ a chain pattern over $r_1$ of rank $\beta+\omega^{\gamma_1}$. 
    
    Suppose we have defined chain patterns $C_k\trianglerighteq\dots\trianglerighteq C_1\trianglerighteq C
    $ over $p$ where each $C_i$ has rank $\beta+\omega^{\gamma_i}$, and $C_k$ is also a chain pattern over some type $r_k$ an extension of $p$ satisfying $\mathrm{SU}(r_k)\approx_{\gamma_k}\mathrm{SU}(q)+\omega^{\gamma_k}$. Then we have $\mathrm{SU}(r_k)+\omega^{\gamma_{k+1}}= \mathrm{SU}(q)+\omega^{\gamma_{k+1}}\leq \mathrm{SU}(p)$. After replacing $r_k$ by an extension over a saturated model, we find $p\subseteq r_{k+1}\subseteq r_{k}$ with $\mathrm{SU}(r_{k+1})\approx_{\gamma_{k+1}}\mathrm{SU}(r_k)+\omega^{\gamma_{k+1}}=\mathrm{SU}(q)+\omega^{\gamma_{k+1}}$. By the inductive hypothesis, we may find a chain pattern $C_{k+1}$ of rank $\beta+\omega^{\gamma_k}+\omega^{\gamma_{k+1}}=\beta+\omega^{\gamma_{k+1}}$ extending $C_k$ over $r_{k+1}$. 
    
    This completes the inductive definition of an ascending chain of chain patterns $C\trianglelefteq C_1\trianglelefteq C_2\trianglelefteq\dots$ over $p$ with $C_i$ having rank $\beta+\omega^{\gamma_i}$. Then the union of this chain is a chain pattern $D\trianglerighteq C$ of rank $\beta+\omega^\alpha$, and again by compactness $D$ is still a chain pattern over $p$.
    \end{proof}

This will yield our first main result:
\begin{theorem}\label{ChainThm}
    Let $\alpha<\omega_1$ be an ordinal and $p\in S(A)$ be a type in a supersimple theory with $\mathrm{SU}(p)\geq\alpha$. Then there exists a chain pattern over $p$ of rank $\alpha$.
\end{theorem}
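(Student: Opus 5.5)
The plan is to write $\alpha$ in Cantor normal form, $\alpha=\omega^{\alpha_1}+\omega^{\alpha_2}+\dots+\omega^{\alpha_n}$ with $\alpha_1\geq\alpha_2\geq\dots\geq\alpha_n$, and to build the required chain pattern in $n$ stages using Proposition \ref{IntermediateProp}. Since $\alpha<\omega_1$, every exponent $\alpha_i$ is countable, so that proposition applies at each stage. If $\alpha=0$ the empty chain pattern works, so I assume $n\geq 1$.

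The key move is to construct the pattern \emph{from the bottom up}. The chain pattern $C$ attached to an extension $q$ records the forking chain below $q$. A chain pattern $D\trianglerighteq C$ over $p$ is obtained by appending new dividing data at the top, between $q$ and $p$. So I will pick a descending sequence of extensions of $p$ whose SU-ranks realize the tail sums of the normal form.

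Concretely, set $\beta_i=\omega^{\alpha_{i+1}}+\dots+\omega^{\alpha_n}$ for $0\leq i\leq n$, so that $\beta_0=\alpha$ and $\beta_n=0$. First I find an extension $q_n$ of $p$ with $\mathrm{SU}(q_n)+\omega^{\alpha_n}\leq \mathrm{SU}(p)$, and apply Proposition \ref{IntermediateProp} with $C$ empty to obtain a chain pattern of rank $\omega^{\alpha_n}$. For the general step I would strengthen the induction as follows. Suppose I have an extension $r_i$ of $p$ carrying a chain pattern $C_i$ of rank $\omega^{\alpha_n}+\dots+\omega^{\alpha_{i+1}}$ (in whichever order concatenation forces). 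Suppose also that $\mathrm{SU}(r_i)+\omega^{\alpha_i}\leq \mathrm{SU}(p)$. Then Proposition \ref{IntermediateProp} yields $C_{i-1}\trianglerighteq C_i$ over $p$ of rank increased by $\omega^{\alpha_i}$.

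The obstacle is an ordinal-arithmetic mismatch. Concatenation adds ranks on the right, $\beta+\omega^{\alpha'}$, so building in this way produces $\omega^{\alpha_n}+\dots+\omega^{\alpha_1}$, which collapses to $\omega^{\alpha_1}$ rather than $\alpha$. The fix is to build top-down in the normal-form order instead: first a pattern of rank $\omega^{\alpha_1}$, then append $\omega^{\alpha_2}$, and so on. For this I need, at stage $i$, an extension $r_i$ carrying a pattern of rank $\omega^{\alpha_1}+\dots+\omega^{\alpha_i}$ such that $\mathrm{SU}(r_i)+\omega^{\alpha_{i+1}}\leq\mathrm{SU}(p)$. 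I also need enough room remaining below $p$ for the later terms.

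To get this I will use Lemma \ref{EvenBetterBuechler}, or the same argument behind it, to choose a deep extension $q$ with $\mathrm{SU}(q)\geq 0$, and then intermediate types whose ranks are $\approx$ the successive partial sums. More precisely, I take $q$ with $\mathrm{SU}(q)=0$, which is possible since $\mathrm{SU}(p)\geq\alpha$ lets me choose, say, an algebraic extension. I then proceed iteratively. Given $r_i\supseteq p$ carrying a pattern of rank $\omega^{\alpha_1}+\dots+\omega^{\alpha_i}$ with $\mathrm{SU}(r_i)\approx_{\alpha_i}\omega^{\alpha_1}+\dots+\omega^{\alpha_i}$, I have $\mathrm{SU}(r_i)+\omega^{\alpha_{i+1}}\leq\mathrm{SU}(p)$, using $\mathrm{SU}(p)\geq\alpha$ and $\alpha_{i+1}\leq\alpha_i$. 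Passing to a nonforking extension over a saturated model via Proposition \ref{ChainExtProp}, Lemma \ref{EvenBetterBuechler} gives $r_{i+1}$ between $p$ and $r_i$ with rank $\approx_{\alpha_{i+1}}\mathrm{SU}(r_i)+\omega^{\alpha_{i+1}}$. Proposition \ref{IntermediateProp}, applied with $r_{i+1}$ in the role of $p$, then extends the pattern by $\omega^{\alpha_{i+1}}$ over $r_{i+1}$, and hence over $p$.

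Verifying the $\approx$ bookkeeping at each step, namely that the rank inequality needed for the next application survives, is where I expect the main difficulty. At the final stage, Proposition \ref{IntermediateProp} applied to $p$ itself yields a chain pattern over $p$ of rank exactly $\alpha$.
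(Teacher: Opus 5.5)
Your ordering is right: the $\omega^{\alpha_1}$ block is the initial (deepest) segment, and later terms are appended closer to $p$. The iteration, however, has a genuine gap at exactly the step you flagged.

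\textbf{The failing step.} You claim that $\mathrm{SU}(r_i)+\omega^{\alpha_{i+1}}\leq\mathrm{SU}(p)$ follows from $\mathrm{SU}(r_i)\approx_{\alpha_i}\omega^{\alpha_1}+\dots+\omega^{\alpha_i}$, $\mathrm{SU}(p)\geq\alpha$ and $\alpha_{i+1}\leq\alpha_i$. This does not follow.
\begin{itemize}
\item The relation $\approx_{\alpha_i}$ only gives $\mathrm{SU}(r_i)=\omega^{\alpha_1}+\dots+\omega^{\alpha_i}+\epsilon$ with $\epsilon<\omega^{\alpha_i}$.
\item When the exponent drops ($\alpha_{i+1}<\alpha_i$), the remainder $\epsilon$ can be $\geq\omega^{\alpha_{i+1}}$ and use up the room needed for the remaining terms.
\item Concretely, take $\alpha=\mathrm{SU}(p)=\omega+1$ and start from an algebraic $q$. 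Lemma \ref{EvenBetterBuechler} with exponent $1$ only promises $\mathrm{SU}(r_1)\approx_1\omega$, which allows $\mathrm{SU}(r_1)=\omega+1$.
\item In that case $r_1$ is a nonforking extension of $p$ and $\mathrm{SU}(r_1)+1>\mathrm{SU}(p)$. There is no room left to append the final $\omega^0$, and nothing in your bookkeeping rules this out.
\end{itemize}
When $\alpha_{i+1}=\alpha_i$, the remainder is absorbed and your step works. That is why $\approx$-bookkeeping suffices inside the proof of Proposition \ref{IntermediateProp}, where exponents are equal or increasing. A Cantor normal form, though, is exactly where exponents decrease.

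\textbf{The missing idea and the paper's route.} You need to control the rank of the deep extension exactly, not up to $\approx$. You also need to obtain its pattern from an induction hypothesis applied to that extension, rather than from your running iteration. The paper's induction on the length of the normal form and on the last coefficient amounts to the following.
\begin{enumerate}
\item Write $\alpha=\beta+\omega^{\gamma}$, where $\gamma$ is the last exponent of the normal form, so $\beta<\alpha$.
\item Choose an extension $q$ of $p$ with $\mathrm{SU}(q)=\beta$ exactly. This exists because you can descend through forking extensions, and well-foundedness stops you at rank $\beta$.
\item Apply the theorem inductively to $q$ itself to get a chain pattern $C$ of rank $\beta$ over $q$.
\item Apply Proposition \ref{IntermediateProp} to the pair $(p,q)$. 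This is legitimate because $\mathrm{SU}(q)+\omega^{\gamma}=\beta+\omega^\gamma=\alpha\leq\mathrm{SU}(p)$.
\item The result is $D\trianglerighteq C$ over $p$ of rank $\alpha$.
\end{enumerate}
No direct appeal to Lemma \ref{EvenBetterBuechler} is needed at this level, since it is already used inside Proposition \ref{IntermediateProp}.
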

\begin{proof}
    Write $\alpha$ in Cantor normal form $\alpha=\omega^{\gamma_1}\cdot k_1+\dots+\omega^{\gamma_n}\cdot k_n$. We induct on $n$.

    If $\alpha=\omega^{\gamma}\cdot k$ then again we proceed by induction on $k$. If $\alpha=\omega^{\gamma}$ then let $q$ be some algebraic extension of $p$. Trivially, $q$ has the empty chain pattern, and $\mathrm{SU}(q)+\omega^\gamma\leq \mathrm{SU}(p)$ so Proposition \ref{IntermediateProp} gives a chain pattern of rank $\omega^\gamma$ over $p$. Now if $\alpha=\omega^\gamma\cdot (k+1)$, then $p$ has an extension of rank $\omega^\gamma\cdot k$ which has a chain pattern of rank $\omega^\gamma\cdot k$ by induction, so we apply the proposition again to get a chain pattern of rank $\omega^\gamma\cdot(k+1)$ over $p$.

    Now suppose $\alpha$ has normal form $\alpha=\omega^{\gamma_1}\cdot k_1+\dots+\omega^{\gamma_n}\cdot k_n+\omega^{\gamma_{n+1}}\cdot k_{n+1}$. Let $\beta:=\omega^{\gamma_1}\cdot k_1+\dots+\omega^{\gamma_n}\cdot k_n$. Then $p$ has an extension $q$ of rank $\beta$ which by hypothesis has a chain pattern of rank $\beta$. Then just as in the previous case, we induct on $k_{n+1}$ to conclude the theorem.
\end{proof}

\section{Lascar rank in expansions}\label{ChainConclusionSection}
Now we additionally fix some supersimple expansion $T^+$ of $T$. We write $\mathrm{acl}^+$ for algebraic closure in the sense of $(T^+)^{eq}$; similarly $\mathrm{SU}^+, \ind^+, \tp^+$ all indicate the corresponding notions relative to $(T^+)^{eq}$ while undecorated symbols are relative to $T^{eq}$. We may always work in a monster model $(\mathbb{M}^+)^{eq}$ of $(T^+)^{eq}$ whose reduct $\mathbb{M}^{eq}$ to $\mathcal{L}^{eq}$ is itself a monster model of $T^{eq}$.

A key fact for supersimple expansions, referenced in the introduction, is the following: if $A,C\subseteq \mathbb{M}^{eq}$ and $B=\mathrm{acl}^+(B)\subseteq (\mathbb{M}^+)^{eq}$ then $A\ind^+_BC$ implies $A\ind_{B\cap \mathbb{M}^{eq}} C$ (this can be found as Exercise 3.5 of [A]). We will want to use the contrapositive: say $A,B,C\subseteq \mathbb{M}^{eq}$ and $B$ is relatively $\mathrm{acl}^+$-closed. Then $A\nind_BC$ implies $A\nind_{\mathrm{acl}^+(B)}^+C$. 

Note that $(T^+)^{eq}$ possibly contains sorts not in $T^{eq}$ so we only ever say that a subset $B\subseteq \mathbb{M}^{eq}$ is \textit{relatively} $\mathrm{acl}^+$-closed; i.e. $B=\mathrm{acl}^+(B)\cap\mathbb{M}^{eq}$.

\begin{theorem}\label{CountableThm}
    Let $\alpha<\omega_1$ and let $p\in S_T(A)$ be a complete $T$-type of rank $\mathrm{SU}(p)\geq\alpha$. Then there exists a complete $T^+$-type $p^+\in S_{T^+}(A)$ extending $p$ with rank $\mathrm{SU}^+(p^+)\geq \alpha$.
\end{theorem}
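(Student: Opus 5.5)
By Theorem \ref{ChainThm}, $p$ has a chain pattern $C=\{(\varphi_\beta,k_\beta)\}_{\beta<\alpha}$ of rank $\alpha$. The plan is to build one $T^+$-type realizing a forking chain of type $C$ over $p$ in which every $T$-dividing step is also a $T^+$-forking step. Such a chain in $T^+$ gives $\mathrm{SU}^+(p^+)\geq\alpha$ for its restriction $p^+$ to $A$, by the usual induction on $\mathrm{SU}$ along a chain of forking extensions indexed by $\alpha^*$.

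\textbf{The partial type.} Write down, in the language of $(T^+)^{eq}$, a partial type $\Pi_C(x,(y_\beta)_{\beta<\alpha},(z_\beta)_{\beta<\alpha})$ over $A$, where $z_\beta$ enumerates $\mathrm{acl}^+(A\cup\{y_\gamma\}_{\gamma\geq\beta})\cap\mathbb{M}^{eq}$. Concretely, $z_\beta$ lists for each $\mathcal{L}^{+}$-formula $\psi(w,\bar y)$ over $A$ with $\bar y$ a finite subtuple of $(y_\gamma)_{\gamma\ge\beta}$ its finitely many solutions in $T^{eq}$-sorts, as far as this is expressible; only whatever of this is type-definable is needed. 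The type $\Pi_C$ says the following.
\begin{enumerate}
\item $x\models p$ in $T$, and $\varphi_\beta(x,y_\beta)$ holds for each $\beta$.
\item $\varphi_\beta(x,y_\beta)$ $k_\beta$-divides in $T$ over $A\,z_{\beta+1}\,(y_\gamma)_{\gamma>\beta}$.
\end{enumerate}
Item 2 is type-definable in $T$, exactly as in Proposition \ref{LimitChains}. Since the base $B_{\beta+1}:=A z_{\beta+1}(y_\gamma)_{\gamma>\beta}$ is relatively $\mathrm{acl}^+$-closed, the contrapositive of the fact from [A] turns $T$-dividing (hence $x\nind_{B_{\beta+1}} y_\beta$) into $x\nind^+_{\mathrm{acl}^+(B_{\beta+1})}y_\beta$. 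This is $T^+$-forking of $\tp^+(x/\ldots y_\beta)$ over $\tp^+(x/A(y_\gamma)_{\gamma>\beta})$, since acl$^+$ does not change $T^+$-forking. Hence any realization of $\Pi_C$ gives a $T^+$ forking chain of length $\alpha$, and $p^+:=\tp^+(x/A)$ works.

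\textbf{Satisfiability by induction on the length.} To satisfy $\Pi_C$, show that every restriction $\Pi_{C_\beta}$ is satisfiable, by induction on $\beta\leq\alpha$.
\begin{itemize}
\item At limit $\beta$, use compactness as in Proposition \ref{LimitChains}. The variables $z$ are attached to tails, so each finite fragment lives in some $\Pi_{C_{\beta'}}$ after discarding unused variables. Some care is needed, since the closure condition refers to tail parameters $y_\gamma$ with $\gamma\ge\beta'$. I would therefore index the inductive statement as: for every $T$-type $p'$ over a relatively $\mathrm{acl}^+$-closed base having a $T$-forking chain of type $C_\beta$, $\Pi_{C_\beta}$ for $p'$ is satisfiable. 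Finite fragments then only mention finitely many indices.
\item At a successor step, handle the last link first. Take a $T$-forking chain of type $C_{\beta+1}$ over $p$, with top step $q_\beta$ containing $\varphi_\beta(x,b)$ that $k_\beta$-divides over $A$. Replace $A$ by $B=\mathrm{acl}^+(A)\cap\mathbb{M}^{eq}$ via a $T$-nonforking extension (Proposition \ref{ChainExtProp}), so that the chain survives. Then move $b$ by a $T$-automorphism so that it is $T$-independent from $B$ over $A$, so that by Fact \ref{DivFact} the dividing persists over $B$. Then close $Bb$ under $\mathrm{acl}^+$ and again pass to a nonforking extension, and apply induction to the remaining chain of type $C_\beta$.
\end{itemize}

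\textbf{Main obstacle.} The hardest part is making the $\mathrm{acl}^+$-closure condition compatible with compactness. Relative $\mathrm{acl}^+$-closedness of tail sets is not obviously type-definable, because the number of algebraic solutions is unbounded across realizations. I would handle this by fixing, for each tail, the $\mathcal{L}^+$-type of the tail and its algebraic-closure data. Then "$z_\beta$ is the full set of realizations of these algebraic formulas" becomes a first-order condition: each algebraic formula gets a fixed finite count. The induction must then carry this bookkeeping, i.e.\ the $T^+$-types of tails, consistently across limits.
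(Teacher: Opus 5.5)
You have the same overall architecture as the paper: realize one $T$-forking chain of type $C$ whose bases are relatively $\mathrm{acl}^+$-closed, convert each link to $T^+$-forking via Adler's condition, and get satisfiability by compactness, building links through nonforking extensions to $\mathrm{acl}^+(\cdot)\cap\mathbb{M}^{eq}$ and Proposition \ref{ChainExtProp}. The gap is exactly where you flag it, and it is the crux of the argument. Nothing in $\Pi_C$ as written guarantees that $Az_{\beta+1}(y_\gamma)_{\gamma>\beta}$ is relatively $\mathrm{acl}^+$-closed in a realization, and your proposed remedy does not work as stated.

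Fixing in advance the $T^+$-types of the tails, so that each algebraic formula has a known count, is circular. You would need one such choice for which \emph{every} finite fragment is satisfiable. But the tail types are only determined by the realization you are trying to build, and your successor step (automorphisms, nonforking extensions, closing off) gives no control over them. The inductive hypothesis for $\Pi_{C_{\beta'}}$ also says nothing about realizing tail types fixed for $\Pi_{C_\beta}$. So your limit-stage claim that a fragment of $\Pi_{C_\beta}$ is a fragment of some $\Pi_{C_{\beta'}}$ is unsupported.

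The missing observation is that exact enumeration is unnecessary. Ask only that $z_\beta$ \emph{contain} $\mathrm{acl}^+(Ay_{\geq\beta})\cap\mathbb{M}^{eq}$. For each $(\mathcal{L}^+)^{eq}$-formula $\psi(w,\bar y)$ over $A$ with $\bar y$ from the tail, and each $n$, include the formula $\exists^{=n}w\,\psi(w,\bar y)\rightarrow$ ``$z^{\psi,0},\dots,z^{\psi,n-1}$ are distinct solutions of $\psi$''. This is the type $\rho$ of Proposition \ref{ACLProp}. No counts are fixed, and it is stated formula by formula, so your limit-stage compactness goes through once $\beta'$ is chosen above all indices mentioned in the fragment.

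After realizing, shrink each base to $D_{\beta+1}:=\mathrm{acl}^+(Ay_{>\beta})\cap\mathbb{M}^{eq}$. Then:
\begin{itemize}
\item $k$-dividing over a set persists over any subset, so $\varphi_\beta(x,y_\beta)$ still $k_\beta$-divides over $D_{\beta+1}$;
\item $D_{\beta+1}$ is genuinely relatively closed;
\item $D_{\beta+1}\subseteq D_\beta\ni y_\beta$.
\end{itemize}
So Adler's condition applies link by link; compare the pruning in Proposition \ref{PruneProp}.

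The paper's own proof of this theorem avoids the issue differently. It names the parameter sets by new predicates $P_\beta$, for which relative $\mathrm{acl}^+$-closedness is a first-order axiom scheme: ``if $\bar u\in P_\beta$ and $\psi(\bar u,w)$ has exactly $n$ solutions, all of them lie in $P_\beta$''. A finite fragment of the resulting theory then involves only finitely many levels. It is satisfied by a \emph{finite} subchain of $C$ with closed bases, built by induction on its length, so no transfinite induction with compactness at limit stages is needed.
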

\begin{proof}
    First replace $p$ by a nonforking extension to $\mathrm{acl}^+(A)\cap \mathbb{M}^{eq}$ (at the end of the argument, we may replace $p^+$ by its restriction to $A$, by which the rank can only possibly go up). By Theorem \ref{ChainThm} we have a chain pattern $C$ over $p$ of rank $\alpha$. For every $\beta<\alpha$, for every sort $E$ of $T^{eq}$, expand $(\mathcal{L}^+)^{eq}$ by predicates $P_\beta^E$ in the respective sorts. Add also constants for $A$ and a single constant $a$. Define an expansion $T^+_C$ of $T^+$ in this language as follows: specify that the $T$-type of $a$ over $A$ is precisely $p$ and that $P_\beta^E\supseteq P_\gamma^E$ whenever $\beta\leq\gamma$. For each $\beta$, specify that the predicates $P^E_\beta$ together form a relatively $\mathrm{acl}^+$-closed set (this will involve a separate axiom for each possible size of a finite solution set for each formula of $(\mathcal{L}^+)^{eq}$ whose free variables are all of sorts available in $T^{eq}$). We will informally refer to the union of the $P_\beta^E$ as $P_\beta$. Next, for each $\beta$, specify that $a$ satisfies some instance $\varphi_\beta(x,y_\beta)$ where the $y_\beta$ are taken from $P_\beta$ and there exist an infinite sequence of elements which are indiscernible over $P_{\beta+1}$ in the sense of $T^{eq}$, satisfying the $T^{eq}$-type of $y_\beta$ over $P_{\beta+1}$, such that the corresponding sequence of instances of $\varphi_\beta(x,y_\beta)$ is $k_\beta$-inconsistent (that is: express the satisfiability of the appropriate partial type by a list of existential formulas). If $\alpha$ is a successor then by convention let us say $P_\alpha=A$.

    If this theory is satisfiable, this means we can find in some model of $T^+$, hence in $(\mathbb{M}^+)^{eq}$, a tuple $a\in\mathbb{M}^{eq}$ and relatively $\mathrm{acl}^+$-closed parameter sets $A_\beta\subseteq \mathbb{M}^{eq}$ (the interpretations of the $P_\beta$) such that if $q_\beta:=\tp(a/A_\beta)$ then $q_\beta$ always contains an instance of $\varphi_\beta$ which $k_\beta$-divides over $A_{\beta+1}$ in the sense of $T$, and $q_0\supseteq q_1\supseteq\dots \supseteq p$. By Adler's condition, this is enough to show that $a\nind^+_{\mathrm{acl}^+(A_{\beta+1})} A_\beta$. %[Come back later to show the actual forking formulas are preserved].
    Hence we have a chain of $T^+$-forking extensions of rank $\alpha$ over $p^+:=\tp^+(a/\mathrm{acl}^+(A))$.

    So we need to show that this theory is finitely satisfiable. All remaining mentions of forking/dividing are in the sense of $T$. To show that the above theory is finitely satisfiable, it suffices to realize any finite part of $C$ by forking extensions which are over relatively $\mathrm{acl}^+$-closed parameter sets, which we may do by induction on the number of extensions $n$:

    In the case $n=1$ we must find some extension $q$ of $p$ over a relatively $\mathrm{acl}^+$-closed set which forks over $A$ as witnessed by $k_\beta$-dividing of some instance of $\varphi_\beta$. We already know that $p$ has some extension $q\in S_T(B),\; A\subseteq B\subseteq \mathbb{M}^{eq}$ with the appropriate dividing witness. Now take a nonforking extension of $q$ to $\mathrm{acl}^+(B)\cap \mathbb{M}^{eq}$; the forking of $q$ over $p$ is still witnessed by the same data.

    In the case $n+1$ we need to find a forking chain of type $\{(\varphi_{\beta_1},k_{\beta_1}),\dots,(\varphi_{\beta_{n+1}},k_{\beta_{n+1}})\}$ where this finite chain pattern is a subsequence of $C$. To ease notation let us simply write $(\varphi_{\beta_i},k_{\beta_i})=:(\varphi_i,k_i)$. By the fact that $C$ is a chain pattern over $p$, we find an extension $q\in S_T(B)$ which forks over $p$ as witnessed by $(\varphi_{n+1},k_{n+1})$ where $q$ itself has a forking chain of type $\{(\varphi_1,k_1),\dots,(\varphi_n,k_n)\}$. Replace $q$ by a nonforking extension to $\mathrm{acl}^+(B)\cap\mathbb{M}^{eq}$. Then by Proposition \ref{ChainExtProp}, $q$ still has a forking chain of type $\{(\varphi_1,k_1),\dots,(\varphi_n,k_n)\}$, and by the inductive hypothesis we may assume all types in this chain have relatively $\mathrm{acl}^+$-closed parameter sets. Also, $(\varphi_{n+1},k_{n+1})$ still witness the forking of $q$ over $p$ so we are done.
\end{proof}

Since the types in countable supersimple theories have at most countable rank, this immediately yields:
\begin{cor}
    Let $T$ be a countable supersimple theory and $T^+$ a supersimple expansion. Then $\mathrm{SU}(T)\leq\mathrm{SU}(T^+)$, where by the SU-rank of a theory, we mean the supremum of SU-ranks of 1-types in this theory.
\end{cor}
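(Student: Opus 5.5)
The plan is to derive the corollary from Theorem \ref{CountableThm}, using the fact that in a countable supersimple theory every type has countable SU-rank. Write $\mathrm{SU}(T)=\sup\{\mathrm{SU}(p): p \text{ a 1-type of } T\}$. Since the SU-rank of every type in $T^+$ is bounded by $\mathrm{SU}(T^+)$, it is enough to show the following: for each 1-type $p$ of $T$ and each ordinal $\alpha\leq\mathrm{SU}(p)$, there is a complete $T^+$ 1-type $p^+$ with $\mathrm{SU}^+(p^+)\geq\alpha$. Taking suprema then gives $\mathrm{SU}(p)\leq\mathrm{SU}(T^+)$ for every $p$, and hence $\mathrm{SU}(T)\leq\mathrm{SU}(T^+)$.

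First I need the countability bound. In a countable supersimple theory, $\mathrm{SU}(p)<\omega_1$ for every type $p$. To see this, take a type $q$ over a set $A$, a realization $a$, and a finite tuple $c$ with $a\ind_c A$; the last exists by supersimplicity. Then $\mathrm{SU}(q)=\mathrm{SU}(a/c)$. The type $\tp(a/c)$ is over a finite set, and the theory is countable, so there are only countably many formulas over $c$. A forking extension of a type over finite parameters can be taken to fork via one of countably many dividing formula/number pairs over finitely many new parameters. From this one shows by induction on $\alpha$ that the set of ranks realized by types over finite sets has countable supremum. Concretely, $\mathrm{SU}(a/c)\geq\beta+1$ requires a forking extension to some $\tp(a/cd)$ with $d$ finite and $\mathrm{SU}(a/cd)\geq\beta$. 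The supremum is therefore bounded by $\omega_1$ unless it equals it, and a cardinality count over countably many types of finite tuples rules that out. This is the standard fact cited just before the statement ("types in countable supersimple theories have at most countable rank"), so I would simply invoke it rather than reprove it.

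Given $p\in S_T(A)$ a 1-type, set $\alpha=\mathrm{SU}(p)<\omega_1$. Theorem \ref{CountableThm} applies directly and gives $p^+\in S_{T^+}(A)$ extending $p$ with $\mathrm{SU}^+(p^+)\geq\alpha$. Since $p^+$ is a 1-type in the home sort of $T^+$, we get $\alpha\leq\mathrm{SU}(T^+)$. The case where $\mathrm{SU}(p)$ is undefined or infinite does not occur, because $T$ is supersimple.

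The only subtle point is the countability of ranks, which underlies the applicability of Theorem \ref{CountableThm}. Everything else is bookkeeping with suprema. Note that $T^+$ need not be countable, but this is irrelevant: the hypothesis $\alpha<\omega_1$ in Theorem \ref{CountableThm} concerns only the rank in $T$.
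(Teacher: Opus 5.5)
Your proposal is correct and is the paper's own argument: apply Theorem \ref{CountableThm} to each 1-type $p$ of $T$, using that types in a countable supersimple theory have countable SU-rank, and take suprema. One caveat about your side sketch of that countability fact: the ``cardinality count over countably many types of finite tuples'' is invalid, since a countable supersimple theory can have $2^{\aleph_0}$ types over a finite set. A correct argument bounds $\mathrm{SU}(p)$ by the foundation rank of the tree of finite sequences of dividing data $(\psi,k)$ that occur along some dividing chain starting from $p$; this tree lives on a countable set and is well-founded by compactness and supersimplicity. Since you ultimately invoke the fact just as the paper does, this does not affect your proof.
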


\section{Forking Trees}\label{Trees}
We continue with the setting and notation established at the beginning of Section \ref{ChainConclusionSection}. To remove the cardinality restriction on $\alpha$ in Theorem \ref{CountableThm}, we will have to more carefully consider the data witnessing the SU-rank of a complete type. To organize these, we will consider certain trees of types and formulas.

\begin{definition}
    The \textit{standard tree of rank} $\alpha$ is a certain partial order (denoted $\trianglelefteq$) whose isomorphism type we describe inductively. We will use $\mathcal{T}_\alpha$ to denote the standard tree of rank $\alpha$:

    \begin{itemize}
        \item The standard tree of rank 0 consists of a single point.

        \item The standard tree of rank $\alpha+1$ consists of a copy of $\mathcal{T}_\alpha$ together with a single new node (the root) which succeeds every node of $\mathcal{T}_\alpha$.

        \item For $\alpha$ a limit, a tree of rank $\alpha$ consists of a disjoint union of copies of $\mathcal{T}_\beta$ for all $\beta<\alpha$ together with a new node (the root) which succeeds every node of each of these copies.
    \end{itemize}
\end{definition}
We will freely use familiar language associated with trees: the root is the maximal element for $\trianglelefteq$; a descendant is a predecessor, and a child is an immediate predecessor.

\begin{definition}
    A forking tree of rank $\alpha$ is a function $\tau$ from $\mathcal{T}_\alpha$ to some disjoint union of type spaces over small parameter sets such that if $q$ is a child of $p$, then $q$ is a forking extension of $p$.

    We will always denote the free tuple of the types in such forking trees by $x$.
\end{definition}

Strictly speaking, the above should say that if $s,t$ are nodes of $\mathcal{T}_\alpha$ with $t$ a child of $s$, then $\tau(t)$ is a forking extension of $\tau(s)$. But using the terms ``root", ``child", etc. for the types in the image of $\tau$ will never cause confusion, so we do so freely.

\noindent If $p$ is the root of a given forking tree $\tau$, then we say $\tau$ is a forking tree for $p$.
\\\\
\noindent If $t$ is a node of $\mathcal{T}_\alpha$ then the set of non-strict descendants $\mathcal{S}(t)$ of $t$ is a copy of $\mathcal{T}_\beta$ for some $\beta\leq \alpha$ (indeed, for $\beta$ equal to the foundation rank of $t$ in $\trianglelefteq$, which in our context is always well-founded). We will call $\beta$ the rank of the node $t$. If $\tau$ is a forking tree of rank $\alpha$ and $t$ has rank $\beta$ then $\tau\mid_{\mathcal{S}(t)}$ is a forking tree of rank $\beta$. We will call this the tree above $\tau(t)$; note that the tree above $\tau(t)$ includes $\tau(t)$.

\begin{definition}
    Given a forking tree $\tau$ with domain $\mathcal{T}_\alpha$, we define a \textit{tree of formulas} or a \textit{shape} $\sigma$ for $\tau$ as follows: letting $r$ denote the root node of $\mathcal{T}_\alpha$, and $\mathcal{L}(x,\blank)$ the set of bipartitioned $\mathcal{L}$-formulas whose first partition is $x$, then $\sigma$ is a function $\mathcal{T}_\alpha\to \mathcal{L}(x, \blank)\times\N$ such that for any non-root node $t\in\mathcal{T}_\alpha$, if $\sigma(t)=(\varphi(x,y),k)$ then the forking of $\tau(t)$ over its parent can be witnessed by $k$-dividing of some instance of $\varphi(x,y)$.

    We will also say that $\alpha$ is the rank of $\sigma$.
\end{definition}
We observe here that the condition for a $\sigma$ to be a shape for $\tau$ is vacuous on the root of $\sigma$; nevertheless, when considering subtrees later, it will be more convenient to keep the root as part of the data of a tree of formulas.

\begin{remark}
    The following are clear:
    \begin{itemize}
        \item Every type of Lascar rank $\geq \alpha$ has a forking tree of rank $\alpha$.
        \item Every forking tree has a shape of equal rank.
    \end{itemize}
\end{remark}

\begin{definition}
    When $\alpha$ is a limit ordinal, for any $\beta<\alpha$ we define the \textit{distinguished subtree of }$\mathcal{T}_\alpha$ of rank $\beta$, which we will denote $\mathcal{T}_\alpha[\beta]$ as follows: If $\beta=\gamma+1$ is a successor ordinal, we take $\mathcal{T}_\alpha[\beta]$ to be the root of $\mathcal{T}_\alpha$ together with the tree above the unique root-child of rank $\gamma$. If $\beta$ is a limit, then we take $\mathcal{T}_\alpha[\beta]$ to be the union of all $\mathcal{T}_\alpha[\gamma]$ for $\gamma<\beta$.
\end{definition}

Similarly, if $\tau$ is a tree (of types or of formulas) of rank $\alpha$ a limit then we take the distinguished subtree of $\tau$ of rank $\beta<\alpha$ to be $\tau_\beta:=\tau\mid \mathcal{T}_\alpha[\beta]$.

When $\gamma<\beta<\alpha$, with $\beta,\alpha$ both limits, it is clear from the definition that $\mathcal{T}_\alpha[\beta][\gamma]=\mathcal{T}_\alpha[\gamma]$, and hence that $(\tau_\beta)_\gamma=\tau_\gamma$.

The following is the analogue of Proposition \ref{ChainExtProp}, and we have already proven the nontrivial case: 
\begin{prop}\label{TreeExtProp}
    If $p\in S(A)$ has a forking tree $\tau$ of shape $\sigma$ and $q\in S(B)$ is a nonforking extension of $p$, then $q$ also has a forking tree of shape $\sigma$.
\end{prop}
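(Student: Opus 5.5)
We argue by induction on the rank $\alpha$ of $\sigma$, following the pattern of Proposition \ref{ChainExtProp}. Rank $0$ is trivial: the tree is just the root, and $q$ itself serves as the forking tree of shape $\sigma$.

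Suppose the root $r$ of $\mathcal{T}_\alpha$ has children $t_i$, with $\tau(t_i)=p_i\in S(D_i)$. Each $p_i$ forks over $p$, witnessed by a formula $\varphi_i(x,d_i)\in p_i$, $d_i\in D_i$, which $k_i$-divides over $A$, where $\sigma(t_i)=(\varphi_i,k_i)$. The subtree above $t_i$ is a forking tree for $p_i$ of shape $\sigma\mid_{\mathcal{S}(t_i)}$, whose rank is smaller than $\alpha$. The point is that the children can be handled independently of one another, and each is handled exactly as in the successor step of Proposition \ref{ChainExtProp}. Fix a child $t_i$.

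\begin{itemize}
\item Since $p_i$ and $q$ both extend $p$, we may apply an $A$-automorphism to assume $p_i\cup q$ is consistent. This moves the whole tree above $t_i$ and preserves both its shape and the dividing witness over $A$.
\item Pick $a\models p_i\cup q$. Choose $D_i'\equiv_{Aa}D_i$ with $D_i'\ind_{Aa}B$, and replace the tree above $t_i$ by its image under an $Aa$-automorphism, so that $D_i\ind_{Aa}B$.
\item From $a\ind_A B$ and transitivity we get $D_ia\ind_A B$. Hence $D_i\ind_A B$ and $a\ind_{D_i}BD_i$, so $q_i':=\tp(a/BD_i)$ is a nonforking extension of $p_i$.
\item By the induction hypothesis, $q_i'$ has a forking tree of shape $\sigma\mid_{\mathcal{S}(t_i)}$.
\item From $D_i\ind_A B$ and symmetry, $\tp(\overline{b}/Ad_i)$ does not divide over $A$. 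Fact \ref{DivFact} then shows that $\varphi_i(x,d_i)$ $k_i$-divides over $B$, and this formula lies in $q_i'$. So $q_i'$ is a forking extension of $q$ witnessed by $(\varphi_i,k_i)$.
\end{itemize}

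We then define the new tree $\tau'$ by $\tau'(r)=q$ and, for each child $t_i$, placing the tree for $q_i'$ above $t_i$. Different children impose no joint constraints: the definition of a forking tree only relates each node to its parent, and the types at distinct nodes live in a disjoint union of type spaces. Hence $\tau'$ is a forking tree for $q$ of shape $\sigma$.

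This argument covers successor and limit $\alpha$ uniformly. The only place the structure of $\alpha$ enters is the indexing of the children, and every child has rank below $\alpha$. Unlike the chain case, no appeal to Proposition \ref{LimitChains} is needed at limits.

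The step that requires care is the bookkeeping of the realization $a$. Each child uses its own $a$, and moving the subtree above $t_i$ by automorphisms must carry the entire subtree, including all its parameters, simultaneously. This is harmless because the induction hypothesis is applied to $p_i$ only after these moves have been made. The only genuine content beyond Proposition \ref{ChainExtProp} is checking that the children are independent of one another, so that no compactness argument is required.
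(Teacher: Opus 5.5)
Your proof is correct and takes the same approach as the paper. The paper runs the successor step of Proposition \ref{ChainExtProp} at the root's child. At limit ranks it notes that no compactness is needed, since a forking tree of shape $\sigma$ is just the union of trees of shapes $\sigma_{\gamma+1}$ (the root plus the subtree above the rank-$\gamma$ child) with no joint constraints. Your child-by-child argument is exactly this, phrased uniformly for successor and limit ranks.
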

\begin{proof}
    The proof is exactly the same as for Proposition \ref{ChainExtProp} at successor stages. The limit stage is now essentially trivial (requiring no compactness argument), once we note: for a shape $\sigma$ of limit rank $\alpha$, a type has a a forking tree of shape $\sigma$ iff it has a forking tree of shape $\sigma_\beta$ for all $\beta<\alpha$. Indeed, it is enough to have such a forking tree for all $\beta<\alpha$ which are successor ordinals.
\end{proof}

\begin{definition}
    We say that a forking tree $\tau$ is \textit{jointly consistent} if the union of all types in the image of $\tau$ is consistent. We will call a realization of this union simply ``a realization of $\tau$".
\end{definition}

We can now phrase a sufficient condition for the conclusion to Theorem \ref{HeaderThm2}:

\begin{prop}\label{GoalProp}
    Suppose $p\in S_T(A)$ with $A$ relatively $\mathrm{acl}^+$-closed has (in $T$) a forking tree of rank $\alpha$ which is jointly consistent and such that all parameter sets are relatively $\mathrm{acl}^+$-closed. Then $p$ extends to a complete type $p^+$ in $T^+$ over $\mathrm{acl}^+(A)$ such that $\text{SU}^+(p^+)\geq \alpha$.
\end{prop}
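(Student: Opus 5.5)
Let $\tau$ be the given forking tree of rank $\alpha$, and let $a$ be a realization of $\tau$. For each node $t$, let $A_t$ be the relatively $\mathrm{acl}^+$-closed parameter set of $\tau(t)$, so that $\tau(t)=\tp(a/A_t)$. Take $p^+:=\tp^+(a/\mathrm{acl}^+(A))$; it extends $p$ because $a\models p$. The plan is to prove, by induction on the rank $\beta$ of a node $t$, that
\[\mathrm{SU}^+(a/\mathrm{acl}^+(A_t))\geq\beta.\]
Applying this at the root gives the statement. Joint consistency is exactly what lets us use the same tuple $a$ at every node, so that the $T^+$-types we produce are nested.

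\textbf{Successor step.} Suppose $t$ has rank $\gamma+1$ and $s$ is its child of rank $\gamma$. Since $\tau(s)$ is a forking extension of $\tau(t)$, we have $A_t\subseteq A_s$ and $a\nind_{A_t}A_s$ in $T$. Because $A_t$ is relatively $\mathrm{acl}^+$-closed, the contrapositive of Adler's condition gives $a\nind^+_{\mathrm{acl}^+(A_t)}A_s$. Then also $a\nind^+_{\mathrm{acl}^+(A_t)}\mathrm{acl}^+(A_s)$, since $\mathrm{acl}^+(A_t)\subseteq\mathrm{acl}^+(A_s)$. So $\tp^+(a/\mathrm{acl}^+(A_s))$ is a forking extension of $\tp^+(a/\mathrm{acl}^+(A_t))$. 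By induction the former has $\mathrm{SU}^+\geq\gamma$, hence the latter has $\mathrm{SU}^+\geq\gamma+1$.

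\textbf{Limit step.} Suppose $t$ has limit rank $\beta$. Its children have ranks $\gamma$ for every $\gamma<\beta$, and each child is again a $T$-forking extension over a relatively $\mathrm{acl}^+$-closed set realized by the same $a$. The successor argument therefore shows $\mathrm{SU}^+(a/\mathrm{acl}^+(A_t))\geq\gamma+1$ for all $\gamma<\beta$, and so $\mathrm{SU}^+(a/\mathrm{acl}^+(A_t))\geq\beta$. This is also the direct definition of SU-rank: a type has rank $\geq\beta$ for limit $\beta$ iff it has rank $\geq\gamma$ for all $\gamma<\beta$.

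The main point needing care is the application of Adler's condition. The sets $a$, $A_t$ and $A_s$ all lie in $\mathbb{M}^{eq}$, and $A_t$ is relatively $\mathrm{acl}^+$-closed, which is the hypothesis used in the contrapositive as stated before Theorem~\ref{CountableThm}. With that confirmed, the induction is routine.

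The essential role of joint consistency is that a single $a$ realizes every node at once. This is what makes the $T^+$-types nested along every branch, and so turns the $T$-tree into a $T^+$-forking tree of the same rank.
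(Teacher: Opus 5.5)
Your proposal is correct and matches the paper's proof: you realize the jointly consistent tree by a single $a$, use the contrapositive of Adler's condition to turn each parent--child $T$-forking into $T^+$-forking over $\mathrm{acl}^+$ of the parent's parameters, and obtain the $T^+$-forking tree $t\mapsto\tp^+(a/\mathrm{acl}^+(A_t))$ of rank $\alpha$; your induction on node rank just makes explicit why such a tree gives $\mathrm{SU}^+\geq\alpha$. One small slip in justification: passing from $a\nind^+_{\mathrm{acl}^+(A_t)}A_s$ to $a\nind^+_{\mathrm{acl}^+(A_t)}\mathrm{acl}^+(A_s)$ is monotonicity via $A_s\subseteq\mathrm{acl}^+(A_s)$, whereas the inclusion $\mathrm{acl}^+(A_t)\subseteq\mathrm{acl}^+(A_s)$ that you cite is what makes the child's $T^+$-type an extension of the parent's.
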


\begin{proof}
    Let $\tau$ be the hypothesized forking tree. Let $\upsilon$ be the ``tree of parameters" for $\tau$; that is, $\upsilon$ is a function with domain $\mathcal{T}_\alpha$ and for $t\in\mathcal{T}_\alpha$, $\upsilon(t)$ is the set of parameters over which $\tau(t)$ is defined. Let $a$ be a realization of $\tau$. For every non-root node $t\in\mathcal{T}_\alpha$ let $t^-$ denote the parent of $t$. Then we have, for all $t\in\mathcal{T}_\alpha$, $a\nind_{\upsilon (t^-)} \upsilon(t)$. Since $\upsilon(t^-)$ is relatively $\mathrm{acl}^+$-closed, we have that $a\nind_{\mathrm{acl}^+(\upsilon(t^-))}^+ \upsilon(t)$. Then also, $a\nind_{\mathrm{acl}^+(\upsilon(t^-))}^+ \mathrm{acl}^+(\upsilon(t))$. Hence, if we define a tree of types by $\tau^+(t):=\tp^+(a/\mathrm{acl}^+(\upsilon(t)))$ then $\tau^+$ is a forking tree (in $T^+$) of rank $\alpha$ for the type $p^+:=\tp^+(a/\mathrm{acl}^+(A))$.
\end{proof}

Hence, our goal should be figure out when a type (in $T$) which has a forking tree of rank $\alpha$ has, moreover, a jointly consistent forking tree of rank $\alpha$ with relatively $\mathrm{acl}^+$-closed parameter sets. We summarize the properties of the trees we seek:
\begin{definition}
    A forking tree $\tau$ in $T$ is called a $T^+$-robust tree of shape $\sigma$ if:
    \begin{itemize}
        \item $\tau$ is jointly consistent

        \item $\tau$ has shape $\sigma$ (so in particular both trees have the same rank $\alpha$), and moreover:

        \item Whenever $s\triangleright t$ is a parent-child pair in $\mathcal{T}_\alpha$, if $p\in S_T(A), q\in S_T(B)$ denote $\tau(s),\tau(t)$ respectively, and $\sigma(t)=(\varphi(x,y),k)$, then $B$ contains a parameter $b$ such that $\varphi(x,b)$ $k$-divides (in the sense of $T$) over $A$, $\varphi(x,b)\in q$, and $B$ contains $\mathrm{acl}^+(Ab)\cap \mathbb{M}^{eq}$.
    \end{itemize}
\end{definition}

And we observe that this definition really is sufficient:
\begin{prop}\label{PruneProp}
    If $p\in S_T(A)$ with $A$ relatively $\mathrm{acl}^+$-closed has a $T^+$-robust tree of shape $\sigma$, then it has a jointly consistent forking tree of shape $\sigma$ with relatively $\mathrm{acl}^+$-closed parameter sets.
\end{prop}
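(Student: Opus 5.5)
The plan is to \emph{prune} the given robust tree: shrink every parameter set to the smallest relatively $\mathrm{acl}^+$-closed set forced by the dividing witnesses, and restrict each type accordingly. Let $\tau$ be the given $T^+$-robust tree of shape $\sigma$ and rank $\alpha$, and $\upsilon$ its tree of parameters. Fix a realization $a$ of $\tau$, which exists by joint consistency. Then $\tau(t)=\tp(a/\upsilon(t))$ for every node $t$. For each non-root node $t$ with parent $t^-$, robustness provides a parameter $b_t\in\upsilon(t)$ such that $\varphi_t(x,b_t)\in\tau(t)$ and $\varphi_t(x,b_t)$ $k_t$-divides over $\upsilon(t^-)$, where $\sigma(t)=(\varphi_t,k_t)$. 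It also gives $\mathrm{acl}^+(\upsilon(t^-)b_t)\cap\mathbb{M}^{eq}\subseteq\upsilon(t)$.

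I would define new parameter sets $\upsilon'(t)$ by recursion from the root downward. Every node has only finitely many ancestors, so this recursion makes sense. Set $\upsilon'(r)=A$, and for a child $t$ put
\[
\upsilon'(t):=\mathrm{acl}^+(\upsilon'(t^-)\,b_t)\cap\mathbb{M}^{eq}.
\]
Simultaneously I would check by induction that $\upsilon'(t)\subseteq\upsilon(t)$. At the root this is trivial. At a child we have $\upsilon'(t^-)\subseteq\upsilon(t^-)$, so $\upsilon'(t)\subseteq \mathrm{acl}^+(\upsilon(t^-)b_t)\cap\mathbb{M}^{eq}\subseteq\upsilon(t)$ by the robustness clause. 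Each $\upsilon'(t)$ is relatively $\mathrm{acl}^+$-closed by idempotence of $\mathrm{acl}^+$, and $\upsilon'(r)=A$ is closed by hypothesis. Also $\upsilon'(t^-)\subseteq\upsilon'(t)$ and $b_t\in\upsilon'(t)$.

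Now define $\tau'(t):=\tp(a/\upsilon'(t))$, the restriction of $\tau(t)$. This tree is jointly consistent, since $a$ realizes every $\tau'(t)$. Its root is still $\tp(a/A)=p$, and children extend their parents because $\upsilon'(t^-)\subseteq\upsilon'(t)$.

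The only point needing an argument is that the forking of each child over its parent is still witnessed with shape $\sigma$. We have $\varphi_t(x,b_t)\in\tau'(t)$, and $\varphi_t(x,b_t)$ $k_t$-divides over $\upsilon(t^-)$. So there is a $\upsilon(t^-)$-indiscernible sequence starting with $b_t$ along which the instances of $\varphi_t$ are $k_t$-inconsistent. That sequence is a fortiori $\upsilon'(t^-)$-indiscernible, because $\upsilon'(t^-)\subseteq\upsilon(t^-)$. Hence $\varphi_t(x,b_t)$ $k_t$-divides over $\upsilon'(t^-)$.

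Thus $\tau'$ is a jointly consistent forking tree of shape $\sigma$ over $p$ with relatively $\mathrm{acl}^+$-closed parameter sets. I expect no real obstacle here. The one subtle point is the direction of monotonicity: the robustness clause is needed exactly to guarantee $\upsilon'(t)\subseteq\upsilon(t)$, so that dividing over the larger original parent set transfers down to the smaller pruned parent set.
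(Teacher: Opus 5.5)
Your proof is correct and is essentially the paper's argument: restrict each $\tau(t)$ to the relative $\mathrm{acl}^+$-closure of a parent parameter set together with the dividing witness, and use that dividing over a set passes to subsets. The only difference is that the paper uses the original parent set, $\mathrm{acl}^+(\upsilon(t^-)b_t)\cap\mathbb{M}^{eq}$, which needs no recursion since robustness already puts the pruned parent set inside $\upsilon(t^-)$; your recursive choice $\mathrm{acl}^+(\upsilon'(t^-)b_t)\cap\mathbb{M}^{eq}$ works equally well and gives smaller parameter sets.
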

\begin{proof}
    Let $\tau$ be the tree in question, say of rank $\alpha$, and let $r$ denote the root of $\mathcal{T}_\alpha$. Let $\upsilon$ be the function on $\mathcal{T}_\alpha$ such that $\upsilon(t)$ is the parameter set of $\tau(t)$. Let $\chi$ be a function on $\mathcal{T}_\alpha\setminus\{r\}$ which picks out the parameters of dividing formulas supplied by the robustness property. That is, if $t^-$ is the predecessor of a non-root $t$ and $\sigma(t)=(\varphi(x,y),k)$ then $\chi(t)=b$ is such that $\varphi(x,b)\in \tau(t)$ with $\varphi(x,b)$ $k$-dividing over $\upsilon(t^-)$ and $\upsilon(t)$ contains $\mathrm{acl}^+(\upsilon(t^-)b)\cap\mathbb{M}^{eq}$. Then define a new tree $\tau'$ by $\tau'(r)=\tau(r)$ and for non-root $t$, $\tau'(t)=\tau(t)\upharpoonright \mathrm{acl}^+(\upsilon(t^-)\chi(t))\cap\mathbb{M}^{eq}$. Then it is easy to check that $\tau'$ is as required, and is realized by any realization of $\tau$.
\end{proof}

We wish to prove next that the existence of $T^+$-robust trees of a specified shape is type definable. For brevity of notation, a sequence of variables indexed by some linear order $I$ will regularly be abbreviated $y_I$, in place of the more correct $(y_i)_{i\in I}$. We observe several type-definable configurations:

\begin{remark}\label{DivRmk}
    For a fixed formula $\varphi(x,y)$, a fixed positive integer $k$, and a sequence of variables $z_I$, there is a partial type $\zeta_\varphi^k(y,z_I)$ such that $\models \zeta_\varphi^k(b,c_I)$ iff $\varphi(x,b)$ $k$-divides over the set enumerated by $c_I$.
\end{remark}

\begin{prop}\label{ACLProp}
    For a given sequence of variables $y_I$ (of sorts available in $T$) there exists a sequence of variables $z_J$ (of sorts available in $T$) and a partial type (in $(\mathcal{L}^{+})^{eq}$) denoted $\rho(y_I,z_J)$, such that $(\mathbb{M}^+)^{eq}\models \rho(b_I,c_J)$ implies that the set $C$ enumerated by $c_J$ contains $\mathrm{acl}^+(B)\cap\mathbb{M}^{eq}$, where $B$ is the set enumerated by $b_I$. Conversely, given $b_I$ and $B$ the set enumerated by $b_I$, the set $C:=\mathrm{acl}^+(B)\cap\mathbb{M}^{eq}$ may be enumerated as $c_J$ (possibly with repetitions) in such a way that $\models \rho(b_I,c_J)$.
\end{prop}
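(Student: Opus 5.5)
We index $J$ by the algebraic data to be witnessed, and write $\rho$ as the conjunction of one clause per such index. Let $J$ consist of all triples $(\theta, \bar i, m)$ where $\theta(w,\bar y)$ is an $(\mathcal{L}^+)^{eq}$-formula (without parameters) with $w$ of a sort of $T^{eq}$, $\bar y$ a finite tuple of variables matching the sorts of some finite subtuple $y_{\bar i}$ of $y_I$ (with $\bar i$ the index tuple), and $m\in\N$ a number with $1\leq m$. For each such triple, introduce $m$ variables $z_{(\theta,\bar i,m),1},\dots,z_{(\theta,\bar i,m),m}$ of the sort of $w$. The sequence $z_J$ is the concatenation of all these blocks; every variable lies in a sort of $T^{eq}$, as required.

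For the index $j=(\theta,\bar i,m)$, let $\rho_j$ be the $(\mathcal{L}^+)^{eq}$-formula
\[\exists^{\leq m} w\,\theta(w,y_{\bar i})\;\rightarrow\;\forall w\Big(\theta(w,y_{\bar i})\rightarrow \bigvee_{l\leq m} w=z_{j,l}\Big),\]
where $\exists^{\leq m}$ is the usual first-order abbreviation. Set $\rho(y_I,z_J):=\bigwedge_{j\in J}\rho_j$.

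\textbf{Forward direction.} Suppose $\models\rho(b_I,c_J)$, let $B$ be the set enumerated by $b_I$ and $C$ the set enumerated by $c_J$, and take $e\in\mathrm{acl}^+(B)\cap\mathbb{M}^{eq}$. Then some formula $\theta(w,b_{\bar i})$ with $b_{\bar i}$ a finite subtuple of $b_I$ isolates an algebraic set containing $e$, of size some $m$. Since $e\in\mathbb{M}^{eq}$, the variable $w$ has a $T^{eq}$-sort, so $j=(\theta,\bar i,m)\in J$. The antecedent of $\rho_j$ holds, so its consequent holds, and hence $e=c_{j,l}$ for some $l\leq m$. Thus $e\in C$, which gives $\mathrm{acl}^+(B)\cap\mathbb{M}^{eq}\subseteq C$.

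\textbf{Converse.} Given $b_I$, let $C:=\mathrm{acl}^+(B)\cap\mathbb{M}^{eq}$. We choose $c_J$ block by block. For $j=(\theta,\bar i,m)$, if $\theta(w,b_{\bar i})$ has at most $m$ solutions, then all of them are in $\mathrm{acl}^+(B)$, and they lie in $\mathbb{M}^{eq}$ because their sort is a $T^{eq}$-sort; so they lie in $C$. Let $c_{j,1},\dots,c_{j,m}$ list these solutions, repeating elements as needed to fill all $m$ slots. If $\theta(w,b_{\bar i})$ has no solutions, there is nothing to fill the slots with that is guaranteed to have the right sort. In that case, take $\theta'(w,\bar y):=(w=w)\wedge\bar y=\bar y$ restricted so that it defines the finite set $\{d\}$ for some $d\in C$ of that sort, and fill the slots with $d$. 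If $C$ has no element of that sort, drop $j$ from $J$ from the start.

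This sort issue is the one place care is needed: we want every enumerated element to be in $C$, so we do not want to fill empty blocks arbitrarily. To handle it uniformly, we restrict $J$ to sorts $S$ of $T^{eq}$ in which $\mathrm{dcl}^+(\emptyset)$ has an element, and we use such an $\emptyset$-definable element as the default filler in empty blocks.

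For sorts without a $\emptyset$-definable element, we use the following observation instead. If $\theta(w,b_{\bar i})$ has a solution, then the block can be filled from those solutions. If it has none, then $\rho_j$ is still satisfied by any choice, since the consequent is vacuously true; but the chosen values must still lie in $C$. So in this case we replace $\rho_j$ by the variant
\[\exists^{\leq m} w\,\theta(w,y_{\bar i})\wedge\exists w\,\theta(w,y_{\bar i})\;\rightarrow\;\cdots\]
and index $J$ by pairs (formula, tuple) only for formulas $\theta$ with $\exists w\,\theta$ true. This last restriction depends on $b_I$, which a fixed $\rho$ cannot do.

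The cleanest remedy, and the one we adopt, is to add to each block a conjunct $\theta(z_{j,1},y_{\bar i})\vee\neg\exists w\,\theta(w,y_{\bar i})$, and in the empty case fill the block with a copy of any element of $C$ of that sort. If $C$ has no element of that sort, then no $b$-algebraic element of that sort exists at all, and the block may be filled with arbitrary elements of $\mathrm{acl}^+(B)$ of the sort. The only caveat is that the statement only asserts "contains", so the extra elements this forces into the enumeration are harmless as long as the enumeration is allowed to be merely a superset of $\mathrm{acl}^+(B)\cap\mathbb{M}^{eq}$. On this reading, however, the converse's claim that $c_J$ enumerates exactly $C$ is not yet fully established: an empty block in a sort where $C$ has no elements cannot be filled from $C$. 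Resolving this edge case is the main obstacle; otherwise the argument is routine.
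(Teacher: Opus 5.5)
Your first construction is correct and is essentially the paper's. The paper indexes by formulas $\varphi_i(\bar y_i,z_i)$ and uses clauses $\exists^{=n}z_i\,\varphi_i\rightarrow$ ``$z_i^0,\dots,z_i^{n-1}$ are pairwise distinct solutions'', which then exhaust the solution set. Your clause $\exists^{\leq m}w\,\theta\rightarrow$ ``every solution is among $z_{j,1},\dots,z_{j,m}$'' says the same thing directly, and your forward direction is fine.

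For the converse, the intended argument is simply this: any block whose clause is vacuous may be filled with an arbitrary element of $C$ of the right sort. A clause is vacuous when $\theta(w,b_{\bar i})$ has more than $m$ solutions (a case you never mention) or has none. None of your later modifications is needed, and the one you adopt is wrong. The conjunct $\theta(z_{j,1},y_{\bar i})\vee\neg\exists w\,\theta(w,y_{\bar i})$, imposed unconditionally, forces $z_{j,1}$ to realize $\theta(w,b_{\bar i})$ even when that formula is non-algebraic, and then nothing in $C$ need qualify. For instance, take $T^+=T$ the theory of an infinite set, $B=\{b\}$ and $\theta(w,y):=w\neq y$ on the home sort. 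The home-sort part of $C$ is $\{b\}$, so the converse fails in the generic case, not merely in an edge case. Likewise, when $C$ has no element of a sort, there are no ``arbitrary elements of $\mathrm{acl}^+(B)$ of the sort'' to use, since they would lie in $C$.

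The edge case you isolate, a $T^{eq}$-sort $S$ with $C\cap S=\emptyset$, is genuine for the statement as literally worded (take $B=\emptyset$ in an infinite set). The paper's proof passes over it too, since its variables $z_i^j$ also range over every $T^{eq}$-sort. It is harmless, though. In that situation every clause in variables of sort $S$ is vacuous, since an algebraic $\theta(w,b_{\bar i})$ with a solution would produce an element of $C\cap S$; only the demand that these slots be filled from $C$ fails.

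It also cannot occur once $B$ contains an element of each home sort. Every sort of $T^{eq}$ is a quotient of a product of home sorts by a $\emptyset$-definable equivalence relation $E$, so it contains $\bar b/E\in\mathrm{dcl}(B)\subseteq C$ for a suitable tuple $\bar b$ from $B$. All parameter sets arising in Proposition~\ref{TypeDefProp} contain the root set. So this can be arranged at the start of Theorem~\ref{FullThm} by passing to a nonforking extension of $p$ to $A$ plus such elements; restricting back to $A$ at the end only raises $\mathrm{SU}^+$.

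With that, your original $\rho$ together with arbitrary filling of vacuous blocks from $C$ is a complete proof.
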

\begin{proof}
The claimed properties of $\rho$ will be clear from its explicit definition:

Fix an enumeration $\{\varphi_i(\overline{y}_i, z_i)\}_{i<\kappa}$ of bipartitioned $(\mathcal{L}^+)^{eq}$-formulas whose first partitions are tuples of variables from $y_I$, and $z_i$ are of sorts available in $T^{eq}$. Then the sequence $z_J$ will be the sequence $(z_i^j)_{i<\kappa, j<\omega}$ where each $z_i^j$ is of the same sort as $z_i$. The partial type $\rho$ contains, for every pair $i<\kappa$, $n<\omega$, the following formula:
\[(\exists^{=n}z_i\varphi_i(\overline{y}_i,z_i))\rightarrow(\bigwedge_{\substack{j_1,j_2<n\\ j_1\neq j_2}}z_i^{j_1}\neq z_i^{j_2}\land \bigwedge_{j<n}\varphi_i(\overline{y}_i, z_i^j))\]
    
\end{proof}

\begin{prop}\label{TypeDefProp}
    For any tree of $\mathcal{L}^{eq}$-formulas $\sigma$ and any sequence of variables $y_I$, there is a partial type (in $(\mathcal{L}^+)^{eq}$), which we call $\pi_\sigma(x, y_I)$, such that $(\mathbb{M}^+)^{eq}\models \pi_\sigma(a, b_I)$ iff the complete $\mathcal{L}^{eq}$-type $\tp(a/B)$ has a $T^+$-robust tree of shape $\sigma$ realized by $a$, where $B$ denotes the set enumerated by $b_I$.
\end{prop}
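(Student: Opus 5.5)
The plan is to build $\pi_\sigma$ by introducing, for every node $t\in\mathcal{T}_\alpha$, a block of variables standing for the parameter set $\upsilon(t)$ of the type $\tau(t)$. We then write down, node by node and edge by edge, the type-definable conditions of Remark \ref{DivRmk} and Proposition \ref{ACLProp}, and finally existentially quantify all of these auxiliary variables except $x$ and $y_I$. Since $a$ is fixed as the realization, joint consistency is automatic: $\tau(t)$ will just be $\tp(a/\upsilon(t))$. This sidesteps any compactness issue, because we never need to assert that a union of types is consistent.

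Concretely, fix a cardinal $\kappa$ large enough to bound the parameter sets. This is legitimate because we may always shrink parameter sets as in Proposition \ref{PruneProp}: each $\upsilon(t)$ can be taken to be generated by $B$ together with the chosen dividing parameters along the branch from the root to $t$, closed under $\mathrm{acl}^+(\cdot)\cap\mathbb{M}^{eq}$. This gives a bound of the form $|I|+|\mathcal{T}_\alpha|+|T^+|$. For each node $t$ we introduce variables $w^t$ of length $\kappa$ enumerating $\upsilon(t)$, and for non-root $t$ a variable $u_t$ for the dividing parameter $\chi(t)$. At the root we identify $w^r$ with $y_I$. The conditions, for each parent-child pair $s\triangleright t$ with $\sigma(t)=(\varphi(x,y),k)$, are the following. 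First, $\varphi(x,u_t)$ holds. Second, $\zeta^k_\varphi(u_t,w^s)$ holds, which expresses $k$-dividing over $\upsilon(s)$. Third, $\rho(w^s u_t, v^t)$ holds for a sub-block $v^t$ of $w^t$, giving $\mathrm{acl}^+(\upsilon(s)\chi(t))\cap\mathbb{M}^{eq}\subseteq\upsilon(t)$. Finally, we need $\upsilon(s)\subseteq\upsilon(t)$ and $u_t\in\upsilon(t)$, which we get by declaring that designated coordinates of $w^t$ equal $w^s$ and $u_t$. We impose no further conditions; the types at the nodes are $\tp(a/w^t)$.

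The resulting collection $\Sigma(x,y_I,(w^t,u_t)_t)$ is a partial $(\mathcal{L}^+)^{eq}$-type. We set $\pi_\sigma(x,y_I)$ to be the type expressing $\exists (w^t,u_t)_t\,\Sigma$. This is a partial type: by compactness and saturation, the projection of a type-definable set along infinitely many variables is type-definable, via the finite existential conjunctions.

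We then verify the two directions. Given a realization $a,b_I,(c^t,d_t)$ of $\Sigma$, define $\tau(t)=\tp(a/c^t)$. Each child extends its parent by the inclusion clause, and the child forks over the parent since it contains $\varphi(x,d_t)$, which $k$-divides over $c^s$. Robustness holds by construction, $a$ realizes everything, and the root type is $\tp(a/B)$. Conversely, given a robust tree realized by $a$, first prune it as in Proposition \ref{PruneProp}. The pruned sets still contain the dividing parameters together with their $\mathrm{acl}^+$-closure over the parent, and dividing over the parent is unaffected because the parent sets only shrink consistently along the whole tree. We then enumerate the pruned sets in length $\kappa$, allowing repetitions, and use the converse clause of Proposition \ref{ACLProp}.

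The main obstacle is the bookkeeping in this converse direction: ensuring that the pruned tree has parameter sets of bounded size, and that the pruning preserves both the inclusions $\upsilon(s)\subseteq\upsilon(t)$ and the $k$-dividing over the (now smaller) parent set. If the pruning causes trouble, a cleaner alternative is to let the variable lengths depend on $B$'s size only through $|I|$, and to use the unpruned sets while restricting to the sub-enumeration needed. In either case $\kappa$ must be chosen uniformly in terms of $|I|$, $|\mathcal{T}_\alpha|$, and $|T^+|$.
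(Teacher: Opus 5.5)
Your proposal is correct, but it is organized differently from the paper's proof. The paper inducts on the rank of $\sigma$. At a successor rank it takes $\chi_\sigma=\{\varphi(x,w)\}\cup\zeta(w,y_I)\cup\rho(y_Iw,z_J)\cup\pi_{\sigma_0}(x,z_J)$ and projects away $w,z_J$. So the block for the child's parameter set is just whatever $\rho$ outputs, and in the converse the child's set $C$ is shrunk to the $\rho$-enumerated $C'\subseteq C$ one step at a time. At a limit rank it sets $\pi_\sigma:=\bigcup_{\beta<\alpha}\pi_{\sigma_\beta}$ and glues robust trees over the distinguished subtrees.

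You instead unroll this recursion into one global type $\Sigma$, with a block of variables for every node and a single projection. This removes the limit case and any gluing; strictly, the paper's gluing must use only successor $\beta$, so that the trees $\tau_\beta$ overlap only at the root. It also makes the converse one explicit construction. The paper's version controls variable lengths automatically. It also makes $\pi_\sigma=\bigcup_{\beta<\alpha}\pi_{\sigma_\beta}$ hold by definition at limit ranks, which is the syntactic form used in Propositions \ref{AlmostUnifProp} and \ref{StarProp2}. With your $\Sigma$ this identity holds only up to equivalence in $(\mathbb{M}^+)^{eq}$: a finite part of $\Sigma$ involves nodes from finitely many successor distinguished subtrees, whose variable blocks are disjoint apart from $x,y_I$. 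That suffices, but it needs a remark.

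One bookkeeping correction concerns the pruning in the converse. You need the branchwise pruning from your second paragraph, $C^r=B$ and $C^t=\mathrm{acl}^+(C^s\chi(t))\cap\mathbb{M}^{eq}$ for $t$ a child of $s$. Proposition \ref{PruneProp} does not do this: it closes the \emph{original} parent set $\upsilon(s)$, so it gives neither a cardinality bound nor an exact match with your clause $\rho(w^su_t,v^t)$. With the branchwise version, robustness gives $C^t\subseteq\upsilon(t)$ by induction, so $k$-dividing over $\upsilon(s)$ passes down to the subset $C^s$. Each $C^t$ arises from $B$ by finitely many steps of adjoining one parameter and closing, so $|I|+|T^+|$ already bounds all the blocks and the $|\mathcal{T}_\alpha|$ term is unnecessary. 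In fact you can drop $\kappa$ altogether and let $w^t$ be exactly $w^s{}^\frown u_t{}^\frown v^t$.
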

\begin{proof}
    Induct on $\alpha$, the rank of $\sigma$. Note: all mentions of dividing in the proof will be in the sense of $T$.\\
    
    \noindent Case $\alpha=0$: here $\pi_\sigma(x,y_I)$ may be taken as the trivial type.\\
    \\
    \noindent Case $\alpha+1$: Let $r$ denote the root node of $\mathcal{T}_{\alpha+1}$ and $r^+$ its unique child. Let $\sigma_0$ denote $\sigma$ less its root node. Let $(\varphi(x,w),k)$ be $\sigma(r^+)$. By Remark \ref{DivRmk}, there is a partial $\mathcal{L}$-type $\zeta(w, y_I)$ such that $\mathbb{M}^{eq}\models\zeta(c,b_I)$ iff $\varphi(x,c)$ $k$-divides over the set enumerated by $b_I$. By Proposition \ref{ACLProp} there is also a sequence of variables $z_J$ and a partial $\mathcal{L}^+$-type $\rho(y_Iw,z_J)$ such that $(\mathbb{M}^+)^{eq}\models \rho(b_Ic,c_J)$ implies that the set enumerated by $c_J$ contains $\mathrm{acl}^+(b_Ic)\cap\mathbb{M}^{eq}$ and conversely, if $C\supseteq \mathrm{acl}^+(b_Ic)\cap\mathbb{M}^{eq}$ then it is possible to enumerate some $C'\subseteq C$ as $c_J$ in such a way that $(\mathbb{M}^+)^{eq}\models \rho(b_Ic,c_J)$. Let $\pi_{\sigma_0}(x, z_J)$ be supplied by inductive hypothesis. Then we define 
    \[\chi_{\sigma}(x,y_I,w,z_J):=\{\varphi(x,w)\}\cup \zeta(w, y_I)\cup \rho(y_Iw,z_J)\cup \pi_{\sigma_0}(x,z_J)\]
    Lastly, let $\pi_\sigma(x,y_I)$ be the partial type $\exists w\exists z_J\chi_{\sigma}(x,y_I,w,z_J)$ (by which we mean the partial type obtained by existentially quantifying $w,z$ in every finite conjunction of formulas from $\chi_\sigma$; in a saturated model this has precisely the meaning suggested by the abuse of notation).
    
    We verify that $\pi_\sigma$ is as desired. Suppose that $(\mathbb{M}^+)^{eq}\models \pi_\sigma(a,b_I)$. Then there are $c,c_J$ such that $(\mathbb{M}^+)^{eq}\models\chi_\sigma(a,b_I,c,c_J)$. Letting $B$ denote the set enumerated by $b_I$ and $C$ that by $c_J$, this implies the following: $(\mathbb{M}^+)^{eq}\models\varphi(a,c)$, $\varphi(x,c)$ $k$-divides over $B$, $C$ contains $\mathrm{acl}^+(Bc)\cap \mathbb{}M^{eq}$, and $\tp(a/C)$ has a $T^+$-robust tree of shape $\sigma_0$ realized by $a$. Call this latter tree $\tau_0$. Then if we prepend $\tp(a/B)$ to $\tau_0$ as a root, we obtain a $T^+$-robust tree of shape $\sigma$ for $\tp(a/B)$, realized by $a$.

    Conversely, suppose that $b_I$ enumerates a set $B$ and that $\tp(a/B)$ has a $T^+$-robust tree of shape $\sigma$ realized by $a$; call this tree $\tau$. Let $p:=\tp(a/B)$ and let $q:=\tp(a/C)$ denote the unique child of $p$ in $\tau$, and let $(\varphi(x,y),k)$ be the value of $\sigma$ corresponding to this node; that is, $(\varphi(x,y),k)=\sigma(r^+)$. Then $C$ contains a parameter $c$ such that $\varphi(x,c)\in q$, $\varphi(x,c)$ $k$-divides over $B$, and $C\supseteq \mathrm{acl}^+(Bc)\cap\mathbb{M}^{eq}$. Accordingly, it is possible to enumerate a subset $C'\subseteq C$ as $c_J$ in such a way that $(\mathbb{M}^+)^{eq}\models\rho(b_Ic,c_J)$. We also have $(\mathbb{M}^+)^{eq}\models\varphi(a,c)$ and $(\mathbb{M}^+)^{eq}\models\zeta(c,b_I)$. Restricting $\tau,\sigma$ to the tree above $r^+$, we get trees $\tau_0$ and $\sigma_0$ of rank $\alpha$ with $\tau_0$ being a $T^+$-robust tree for $q$ of shape $\sigma_0$ realized by $a$. If we replace $q$ in $\tau_0$ by $q':=q\mid C'$ then we still have a $T^+$-robust tree of shape $\sigma_0$ for $q'$ realized by $a$ and hence $(\mathbb{M}^+)^{eq}\models\pi_{\sigma_0}(a,c_J)$ by inductive hypothesis. So altogether we have that $(\mathbb{M}^+)^{eq}\models\chi_\sigma(a,b_I,c,c_J)$ and so $(\mathbb{M}^+)^{eq}\models\pi_\sigma(a,b_I)$.\\
    \\
    Case $\alpha$ is a limit: For each $\beta<\alpha$ recall the definition of the distinguished subtree of $\mathcal{T}_\alpha$ with rank $\beta$ given previously, and of the restriction $\sigma_\beta$.

    We define $\pi_\sigma(x,y_I):=\bigcup_{\beta<\alpha}\pi_{\sigma_\beta}(x,y_I)$. Then $\pi_\sigma$ is as desired if we can observe that $p:=\tp(a/b_I)$ has a $T^+$-robust tree of shape $\sigma$ realized by $a$ iff for all $\beta<\alpha$, $p$ has a $T^+$-robust tree of shape $\sigma_\beta$ realized by $a$. The forward implication is clear, and for the reverse implication, taking the union of all the hypothesized trees $\tau_\beta$ gives exactly a $T^+$-robust tree of shape $\sigma$ for $p$ realized by $a$.
\end{proof}

The remaining arguments will require further hypotheses on the shape of a forking tree:

\begin{definition}
    Let $\sigma$ be a function with domain $\mathcal{T}_{\lambda+\omega}$ where $\lambda$ is a limit ordinal (or 0). Then we say $\sigma$ is \textit{chain-like} if:
    \begin{enumerate}
        \item The tree above any node of rank $\lambda$ is the same 
        \item The tree given by (1) is, except for possibly the root node, identical to the distinguished subtree $\sigma_\lambda$
        \item Every node of rank $\lambda+n$ is identical, for fixed $n$ (note the case $n=0$ already follows from condition 1).
    \end{enumerate}
\end{definition}
To justify the above terminology, note that a chain-like tree is exactly the sort of forking tree that arises from a certain chain of forking extensions: let $p$ be a type of rank $\lambda+\omega$ and let $q$ be some extension of rank $\lambda$. Suppose there is an infinite sequence of intermediate forking extensions $q=q_0\supseteq q_1\supseteq q_2\supseteq\dots\supseteq p$. Take some forking tree of rank $\lambda$ above $q_0$. It is clear that these data, the tree and the chain, together are enough to specify a forking tree of rank $\lambda+\omega$ over $p$ and that this tree will be chain-like in the above sense. It is also clear that a chain-like forking tree has a chain-like shape.

\begin{definition}
    We say that a function on $\mathcal{T}_\alpha$ is \textit{almost uniform} if the subtree above any node of rank $\lambda+\omega$, where $\lambda$ is a limit or zero, is chain-like.
\end{definition}

Since taking complete subtrees preserves the rank of nodes, it is clear that any subtree above any node of an almost uniform tree is almost uniform. Likewise, on an almost uniform tree of limit rank, any distinguished subtree is almost uniform.

\begin{remark}
    If we wish to make the connection with chain patterns from before, we could define a \textit{uniform} function on $\mathcal{T}_\alpha$ to be one which assigns the same value to every node of a fixed rank. Clearly the data of a uniform tree of formulas is essentially a chain pattern. A chain pattern of rank $\alpha$ gives rise to a forking tree of uniform shape, and conversely a straightforward compactness argument shows that if a type has a forking tree of uniform shape, then it has a chain pattern specified by the same sequence of formulas and dividing numbers that determine that uniform shape.

    Hence we see that having a forking tree of almost-uniform shape of some rank is a weakening of our previous notion of having a chain pattern of that rank.
\end{remark}

The point of the next few propositions is that if a type $p$ has a tree of almost-uniform shape $\sigma$, then almost uniformity will allow us to run a compactness argument to show that $p$ has a $T^+$-robust tree of shape $\sigma$.

\begin{prop}\label{AlmostUnifProp}
    If $\sigma$ is an almost uniform shape of limit rank $\alpha$ and $y_I$ some sequence of variables, then there is an increasing sequence of ordinals $\beta_i<\alpha$ (indexed by some ordinal $\delta$) cofinal in $\alpha$ such that, for each $i$ and each $\gamma<\beta_i$, $\pi_{\sigma_{\beta_i}}(x,y_I)$ entails $\pi_{\sigma_\gamma}(x,y_I)$ where these are the partial types defined in Proposition \ref{TypeDefProp} 
\end{prop}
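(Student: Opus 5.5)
The plan is to split according to the form of the limit ordinal $\alpha$. Either $\alpha$ is a limit of limit ordinals, or $\alpha=\lambda+\omega$ with $\lambda$ a limit or $0$. Only the second case uses almost uniformity.

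\emph{Case 1: $\alpha$ is a limit of limit ordinals.} Here I take $(\beta_i)_{i<\delta}$ to be any increasing sequence of limit ordinals cofinal in $\alpha$. For a limit $\beta$, the distinguished subtree is by definition $\mathcal{T}_\alpha[\beta]=\bigcup_{\gamma<\beta}\mathcal{T}_\alpha[\gamma]$. The limit step of Proposition \ref{TypeDefProp} sets $\pi_{\sigma_\beta}=\bigcup_{\gamma<\beta}\pi_{(\sigma_\beta)_\gamma}$, and $(\sigma_\beta)_\gamma=\sigma_\gamma$. So $\pi_{\sigma_\beta}\supseteq\pi_{\sigma_\gamma}$ for every $\gamma<\beta$, and the entailment is trivial.

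\emph{Case 2: $\alpha=\lambda+\omega$.} Then $\sigma$ is chain-like, and I take $\beta_n:=\lambda+n+1$ for $n<\omega$. By the chain-like conditions, the tree $\sigma_{\lambda+n+1}$ consists of the root followed by a chain of nodes of ranks $\lambda+n,\lambda+n-1,\dots,\lambda$. For fixed $m$, every node of rank $\lambda+m$ carries the same label, and the tree above the node of rank $\lambda+m+1$ agrees, except at its root, with $\sigma_{\lambda+m+1}$. The tree above the node of rank $\lambda$ agrees, except at its root, with $\sigma_\lambda$. The key auxiliary fact I will prove first is this:
\begin{quote}
(Base shrinking) If $B\subseteq C$ and $\tp(a/C)$ has a $T^+$-robust tree of shape $\sigma'$ realized by $a$, then $\tp(a/B)$ has one too, obtained by replacing the root type by $\tp(a/B)$.
\end{quote}
Only the edges out of the root need checking. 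If $\varphi(x,b)$ $k$-divides over $C$, the witnessing $C$-indiscernible sequence is also $B$-indiscernible with the same type over $B$, so $\varphi(x,b)$ $k$-divides over $B$. Also $\mathrm{acl}^+(Bb)\cap\mathbb{M}^{eq}\subseteq\mathrm{acl}^+(Cb)\cap\mathbb{M}^{eq}$, which the child's parameter set already contains. Joint consistency is clear.

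Now suppose $\models\pi_{\sigma_{\lambda+n+1}}(a,b_I)$ and let $\gamma<\lambda+n+1$. Proposition \ref{TypeDefProp} gives a robust tree for $\tp(a/B)$ realized by $a$. If $\gamma=\lambda+m+1$ with $m<n$, I take the node of rank $\lambda+m+1$ on the chain; if $m+1=n+1$ this is just the root. The subtree above it is a robust tree of shape $\sigma_{\lambda+m+1}$ except at the root, for a type over a parameter set containing $B$. Base shrinking turns it into a robust tree of shape $\sigma_{\lambda+m+1}$ for $\tp(a/B)$, and the converse direction of Proposition \ref{TypeDefProp} gives $\pi_{\sigma_\gamma}(a,b_I)$. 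If $\gamma\le\lambda$, I do the same with the node of rank $\lambda$, obtaining a robust tree of shape $\sigma_\lambda$ for $\tp(a/B)$ when $\lambda>0$. If $\gamma<\lambda$ I then pass to the distinguished subtree $(\sigma_\lambda)_\gamma=\sigma_\gamma$; restricting a robust tree to a distinguished subtree is clearly still robust. The case $\lambda=0$, $\gamma=0$ is trivial.

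The main point of care is Case 2: one must check that the chain-like conditions really do identify the subtree above an interior chain node with the distinguished subtree $\sigma_{\lambda+m+1}$, rather than merely with some tree of the same rank. The argument also relies on base shrinking, since skipping levels of the chain would not otherwise preserve the dividing witnesses relative to the original base $B$.
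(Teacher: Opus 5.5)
Your proof is correct and follows the paper's argument: the same split into limits of limits versus $\lambda+\omega$, the limit-stage definition of $\pi$ in the first case, and in the second case the chain-like conditions (1)--(3) to identify the shape that remains after removing chain nodes from a $T^+$-robust tree. The paper removes one node at a time, showing that $\pi_{\sigma_{\lambda+i+1}}$ entails $\pi_{\sigma_{\lambda+i}}$ and then composing. You instead jump directly to the node of rank $\gamma$, and you state explicitly the base-shrinking fact ($k$-dividing over $C$ implies $k$-dividing over $B\subseteq C$, and $\mathrm{acl}^+(Bb)\subseteq\mathrm{acl}^+(Cb)$) that the paper's deletion step uses without stating it.
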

\begin{proof}
    Either $\alpha$ has a cofinal subsequence of limit ordinals or it has the form $\lambda+\omega$ for $\lambda$ a limit or zero. In the former case, let $\beta_i$ be such a subsequence. Then it is immediate from the definition of $\pi_\sigma$ at limit ranks that $\pi_{\sigma_{\beta_i}}(x,y_I)$ entails $\pi_{(\sigma_{\beta_i})_\gamma}(x,y_I)$ for $\gamma<\beta_i$. Recall also that $(\sigma_{\beta_i})_{\gamma}=\sigma_\gamma$, hence $\pi_{\sigma_{\beta_i}}(x,y_I)$ entails $\pi_{\sigma_{\gamma}}(x,y_I)$.

    In the second case, say $\alpha=\lambda+\omega$, and let $\beta_i:=\lambda+i$. Let $\gamma<\lambda=\beta_0$. We first observe that $\pi_{\sigma_\lambda}(x,y_I)$ entails $\pi_{\sigma_\gamma}(x,y_I)$. If $\lambda=0$ the condition is vacuous; otherwise it follows from the fact that $\sigma_\gamma=(\sigma_\lambda)_\gamma$ since $\lambda$ is a limit ordinal. For the rest, it will then suffice to show that $\pi_{\sigma_{\lambda+i+1}}(x,y_I)$ entails $\pi_{\sigma_{\lambda+i}}(x,y_I)$. Indeed, deleting the unique node of rank $\lambda+i$  from a $T^+$-robust tree of shape $\sigma_{\lambda+i+1}$ yields a $T^+$-robust tree of shape $\sigma_{\lambda+i}$: if $i=0$ this follows from condition (2) in the definition of chain-like; if $i>0$ then this follows from conditions (1) and (3) of chain-like.
\end{proof}

\begin{prop}\label{StarProp2}
    Suppose that $p\in S_T(A)$ has a forking tree (in the sense of $T$) of almost uniform shape $\sigma$. Then $p$ also has  a $T^+$-robust tree of shape $\sigma$.
\end{prop}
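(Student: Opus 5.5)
We argue by induction on the rank $\alpha$ of $\sigma$, proving the statement simultaneously for every complete $T$-type $p$ over any small parameter set that has a forking tree of shape $\sigma$. The induction keeps all the dividing in the sense of $T$. The only new requirement we must arrange is the $\mathrm{acl}^+$-closure clause and joint consistency. For rank $0$ there is nothing to prove.

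\emph{Successor case} ($\alpha+1$). Let $\tau$ be a forking tree of shape $\sigma$ for $p\in S_T(A)$, let $q\in S_T(B)$ be the unique child, and let $\varphi(x,b)\in q$ be the formula that $k$-divides over $A$, where $\sigma(r^+)=(\varphi,k)$. We replace $q$ by a nonforking extension $q'$ to $B':=\mathrm{acl}^+(B)\cap\mathbb{M}^{eq}$. By Proposition \ref{TreeExtProp}, $q'$ still has a forking tree of shape $\sigma_0$, and this shape is almost uniform because it is a subtree of an almost uniform tree. By the inductive hypothesis, $q'$ has a $T^+$-robust tree of shape $\sigma_0$; in particular this tree is jointly consistent. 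Prepending $p$ as the root gives the required tree: $\varphi(x,b)\in q'$ still $k$-divides over $A$, $B'\supseteq\mathrm{acl}^+(Ab)\cap\mathbb{M}^{eq}$, and joint consistency holds because a realization of the subtree realizes $q'\supseteq p$.

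\emph{Limit case.} By Proposition \ref{TypeDefProp} it is enough to find $a$ with $\models\pi_\sigma(a,b_I)$, where $b_I$ enumerates $A$ and $\tp(a/A)=p$. Here $\pi_\sigma=\bigcup_{\beta<\alpha}\pi_{\sigma_\beta}$, and the resulting robust tree is realized by $a$, hence jointly consistent. So we must show that the partial type $p(x)\cup\pi_\sigma(x,b_I)$ is consistent. Take the cofinal sequence $(\beta_i)_{i<\delta}$ from Proposition \ref{AlmostUnifProp}. Any finite part of $p\cup\pi_\sigma$ lies in $p\cup\pi_{\sigma_\gamma}$ for finitely many $\gamma$. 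Choosing $\beta_i$ above all of these $\gamma$, Proposition \ref{AlmostUnifProp} says that $\pi_{\sigma_{\beta_i}}$ entails each $\pi_{\sigma_\gamma}$. Hence it suffices that $p\cup\pi_{\sigma_{\beta_i}}(x,b_I)$ be consistent for each $i$.

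To get this consistency, note that the distinguished subtree $\tau_{\beta_i}$ of the given forking tree is a forking tree for $p$ of shape $\sigma_{\beta_i}$, which is almost uniform and of rank $\beta_i<\alpha$. By the inductive hypothesis, $p$ has a $T^+$-robust tree of shape $\sigma_{\beta_i}$. Any realization $a$ of it satisfies $p$ and, by Proposition \ref{TypeDefProp}, satisfies $\pi_{\sigma_{\beta_i}}(a,b_I)$. Compactness then gives the result.

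\emph{Main obstacle.} The delicate step is the compactness reduction in the limit case. Having robust trees of shape $\sigma_\gamma$ for each $\gamma$ separately does not yield a single realization of all of them. This is exactly where almost uniformity enters, through Proposition \ref{AlmostUnifProp}: it turns finite satisfiability of the union into satisfiability of single members of a cofinal sequence. I also need to check that the entailments in Proposition \ref{AlmostUnifProp} are uniform in the parameter sequence $y_I$ (they are, as stated), and that in the successor case the inductive hypothesis is applied over the new parameter set $B'$. The latter is why the induction must range over all types and parameter sets.
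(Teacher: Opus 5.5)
Your proposal follows the paper's own proof step for step. The successor case is fine, and so is the limit-case reduction, via Propositions~\ref{TypeDefProp} and~\ref{AlmostUnifProp} and compactness, to the consistency of $p\cup\pi_{\sigma_{\beta_i}}$.

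\textbf{The gap.} It is in the next sentence, ``$\sigma_{\beta_i}$, which is almost uniform'', which is what lets you invoke the inductive hypothesis. The paper relies on the same claim (the remark after the definition of almost uniform), and it fails in general.

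\begin{itemize}
\item Passing to a distinguished subtree preserves the rank of every non-root node, but the root now has rank $\beta_i$.
\item When $\beta_i=\lambda+\omega$, almost uniformity of $\sigma_{\beta_i}$ therefore requires $\sigma_{\beta_i}$ itself to be chain-like. That condition compares the subtrees above \emph{different} root-children of $\sigma$.
\item Almost uniformity of $\sigma$ never makes such comparisons. It only constrains subtrees above nodes of rank $\lambda'+\omega$. The only proper ancestor of a root-child is the root, whose rank $\alpha$ is, in this case of Proposition~\ref{AlmostUnifProp}, a limit of limits and so not of that form.
\item Concretely, take $\alpha=\omega^2$. Then $\sigma_\omega$ is the root plus the trees above the finite-rank root-children. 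Almost uniformity of $\sigma$ says nothing about these, yet chain-likeness of $\sigma_\omega$ would force all nodes of rank $n$ (for instance all leaves) to carry the same data.
\item For $\alpha=\mu+\omega^2$, every cofinal sequence of limits below $\alpha$ is eventually of the form $\mu+\omega k$. So re-choosing the $\beta_i$ does not help, and the induction must pass through such ranks.
\item When $\alpha=\lambda+\omega$ the claim is fine: for $i\geq 1$ the root of $\sigma_{\lambda+i}$ has successor rank, and $\sigma_\lambda$ is covered by condition (2) of chain-like.
\end{itemize}

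\textbf{Why this is not just a missing justification.} Without the inductive hypothesis, consistency of $p\cup\pi_{\sigma_{\mu+\omega k}}$ asks for a single realization of $p$ that carries robust trees for the branches above all root-children of rank $\mu+\omega(k-1)+n$, $n<\omega$. Those branches' shapes need not be related. This is exactly the simultaneity that almost uniformity was meant to supply. In the case $\alpha=\lambda+\omega$ it works because deleting a node turns a robust tree of shape $\sigma_{\lambda+i+1}$ into one of shape $\sigma_{\lambda+i}$; no such relation holds here.

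\textbf{What a repair would need.} You would need an extra coherence hypothesis on $\sigma$, such as chain-likeness of the relevant distinguished subtrees, together with a matching strengthening of Proposition~\ref{AlmostUnifExistenceProp}. Be warned that this is expensive. Asking it of every distinguished subtree of rank $\lambda+\omega$, on top of almost uniformity, already forces all nodes of equal rank in $\sigma$ to carry identical subtrees. That makes $\sigma$ uniform, i.e.\ essentially a chain pattern, and the paper only constructs those for countable rank.
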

\begin{proof}
    We proceed by induction on $\alpha$, the rank of $\sigma$. The case $\alpha=0$ is clear as all the extra conditions for a $T^+$-robust tree are vacuous.\\
    \\
    \noindent Case $\alpha+1$: let $r,r^+$ denote the root of $\mathcal{T}_{\alpha+1}$ and its unique child, respectively. Let $\tau$ be a rank $\alpha+1$ tree for $p$ of shape $\sigma$. Let $\sigma_0,\tau_0$ be the restrictions of $\sigma,\tau$ to the tree above $r^+$. Let $q=\tau(r^+)$, the unique child of $p$ in $\tau$. Then $\tau_0$ is a rank $\alpha$ forking tree of shape $\sigma_0$ for $q$. Let $q'$ denote a nonforking extension of $q$ to $B':=\mathrm{acl}^+(B)\cap \mathbb{M}^{eq}$. By Proposition \ref{TreeExtProp}, $q'$ also has a rank $\alpha$ tree of shape $\sigma_0$. By inductive hypothesis, $q'$ has a $T^+$-robust tree of shape $\sigma_0$; call it $\tau_0'$. If $(\varphi(x,y),k)$ denotes $\sigma(r^+)$, note that $q'$ contains an instance $\varphi(x,b)$ $k$-dividing over $A$ and that $\mathrm{acl}^+(Ab)\subseteq B'$. Thus, prepending $p$ to $\tau_0'$ as root node gives a $T^+$-robust tree for $p$ of shape $\sigma$.
    
    Now suppose $\alpha$ is a limit ordinal. We need to show consistency of $p(x,A)\cup \pi_\sigma(x,a_I)$ where $a_I$ enumerates $A$. By compactness and the definition of $\pi_\sigma$ at limit ranks, it is enough to show consistency of $p(x,A)\cup \bigcup_{\gamma<\beta}\pi_{\sigma_\gamma}(x,a_I)$ for arbitrary $\beta<\alpha$. By Proposition \ref{AlmostUnifProp} there is an increasing sequence $(\beta_i)_{i<\delta}$ of ordinals cofinal in $\alpha$ such that for each $i$ and each $\gamma<\beta_i$, $\pi_{\sigma_{\beta_i}}(x,y_I)$ entails $\pi_\gamma(x,y_I)$. Therefore, it will suffice to show the consistency of $p(x,A)\cup \pi_{\sigma_{\beta_{i}}}(x,y_I)$ for arbitrarily large $i$. But $p$ has a forking tree of shape $\sigma$, hence a forking tree of shape $\sigma_{\beta_i}$, hence a $T^+$-robust tree of shape $\sigma_{\beta_i}$ by the inductive hypothesis, so this partial type is consistent.
\end{proof}

We next verify that every type really does have a tree of almost-uniform shape. Note the similarity of the proof with that of Proposition \ref{IntermediateProp}. Indeed, it would be possible to define a parametrized notion of $\omega^\alpha$-almost-uniform for trees (where we view our current definition as being really ``$\omega$-almost-uniform"), and a similar argument to what follows would show the existence of such trees for countable $\alpha$, but there is no need for any more than the uniformity we have already defined.

\begin{prop}\label{AlmostUnifExistenceProp}
    Let $p$ be a type of rank $\geq\alpha$ in a supersimple theory. Then $p$ has a forking tree of rank $\alpha$ with almost uniform shape. 
\end{prop}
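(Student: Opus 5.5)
We argue by induction on $\alpha$, proving the slightly stronger statement that every type of rank $\geq\alpha$ has a forking tree of rank $\alpha$ with almost uniform shape. The case $\alpha=0$ is trivial.

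For a successor $\alpha+1$, we pick a forking extension $q$ of $p$ with $\mathrm{SU}(q)\geq\alpha$. By induction, $q$ has a forking tree of rank $\alpha$ with almost uniform shape. We then prepend $p$ as a root, labelled by any dividing witness for the forking of $q$ over $p$. The nodes of rank of the form $\lambda+\omega$ in the new tree are exactly those of the old tree, since the new root has rank $\alpha+1$. If $\alpha+1=\lambda+\omega$, then $\alpha$ would be a limit plus $\omega$ minus one, which is impossible. Hence almost uniformity is preserved.

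For a limit $\alpha$ not of the form $\lambda+\omega$, the subtrees above the root's children are independent of one another. For each $\beta<\alpha$ we choose, by induction, an extension $q_\beta$ of $p$ with $\mathrm{SU}(q_\beta)\geq\beta$ and an almost uniform forking tree over it. We then place these trees as the root-children, together with the appropriate dividing labels. Every node of rank $\lambda+\omega$ lies inside one of these subtrees, so the resulting tree is almost uniform.

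The main case is $\alpha=\lambda+\omega$. Here we mimic Proposition \ref{IntermediateProp} with exponent $0$, that is, by building a chain. Since $\mathrm{SU}(p)\geq\lambda+\omega$, we first fix an extension $q$ with $\mathrm{SU}(q)\geq\lambda$, or with rank exactly $\lambda$ if possible. By induction, $q$ has an almost uniform forking tree $\tau_\lambda$ of rank $\lambda$, with shape $\sigma'$. We then need an infinite chain of forking extensions $q=q_0\supseteq q_1\supseteq\dots\supseteq p$ such that the dividing data $(\varphi_n,k_n)$ witnessing that $q_n$ forks over $q_{n+1}$ can be used uniformly. This is exactly a chain pattern $\sigma'$-tree concatenated with $\omega$ extra steps.

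To get this, we apply Lemma \ref{EvenBetterBuechler} with exponent $0$ repeatedly. Alternatively, and more simply, we use that $\mathrm{SU}(p)\geq\lambda+\omega$, after passing to nonforking extensions over saturated models (Proposition \ref{TreeExtProp}). For each $n$ we get $q^{(n)}$ with rank $\geq\lambda+n$ and a chain of length $n$ down to a rank $\geq\lambda$ type.

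The obstacle is arranging a single infinite chain, together with a \emph{single} rank-$\lambda$ tree shape at the bottom that also agrees with the distinguished subtree $\sigma_\lambda$, as condition (2) of chain-like demands. We plan to handle this as follows. Build $q_1\subseteq q_0$ with $\mathrm{SU}(q_1)\geq \mathrm{SU}(q_0)+1$ via Lemma \ref{EvenBetterBuechler}, which keeps $\mathrm{SU}(q_n)\approx_0\mathrm{SU}(q)+n$, and iterate, obtaining $q_{n+1}$ from $q_n$ exactly as in the successor step of Proposition \ref{IntermediateProp}. Here we must choose $\mathrm{SU}(q)=\lambda$ exactly, or at least arrange $\mathrm{SU}(q)+\omega\leq\mathrm{SU}(p)$, so that the lemma applies at each step. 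The union of the chain then extends $p$ by Proposition \ref{LimitChains}-style compactness, and we must check that its top is still $p$. Setting the root of rank $\lambda$ equal to $q_0$ with tree $\tau_\lambda$, and each rank-$(\lambda+n)$ node equal to $q_n$, yields a chain-like tree. Condition (2) is then satisfied by defining the distinguished subtree $\sigma_\lambda$ to be this very $\sigma'$ tree. Almost uniformity below holds by the inductive hypothesis applied to $\tau_\lambda$.
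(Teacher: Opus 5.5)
You have the right ingredients for the main case, but as set up it has a genuine gap. Your successor case and your limit case for $\alpha$ not of the form $\lambda+\omega$ are fine.

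\textbf{Where it breaks.} Your final assembly makes each rank-$(\lambda+n)$ node equal to $q_n$, with $q_{n-1},\dots,q_0,\tau_\lambda$ above it. That needs a literal chain $q=q_0\supseteq q_1\supseteq q_2\supseteq\dots\supseteq p$, and iterating Lemma \ref{EvenBetterBuechler} does not produce one. The lemma needs the larger type to be over an $|A|^+$-saturated model, where $A$ is the domain of $p$. Its output is $\tp(a/Ab)$ with $b$ a finite tuple. Such a type has only finitely many restrictions containing $p$, so no infinite forking chain can descend from $q_1$ to $p$. This is exactly why the successor step of Proposition \ref{IntermediateProp} first replaces $r_k$ by a nonforking extension. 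The next intermediate type is then a restriction of that extension, not of $r_k$. So the $q_n$ you build do not form a chain, and the tree you describe is not a forking tree.

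The ``Proposition \ref{LimitChains}-style compactness'' you invoke to repair this is left unspecified, including whether the top is still $p$. Proposition \ref{LimitChains} is about chain patterns, whereas here a rank-$\lambda$ tree sits at the bottom. More importantly, any such compactness would first need, for every $n$, a configuration over $p$ with the \emph{same} $\sigma'$ and the \emph{same} data $(\varphi_1,k_1),\dots,(\varphi_n,k_n)$. Producing that is precisely what has to be proved.

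\textbf{The missing idea.} Chain-likeness is a condition on the shape alone, and the root-children of a forking tree are independent of one another. So you need neither a single chain nor compactness. Fix the data one step at a time:
\begin{enumerate}
\item Maintain that $p$ and some forking extension $r_{n+1}$ of $p$ both have forking trees of rank $\lambda+n+1$. In these trees the part above the rank-$\lambda$ node has shape $\sigma'$ (with its root label fixed once and for all), the node of rank $\lambda+i$ carries $(\varphi_i,k_i)$ for $1\le i\le n$, and the top label is arbitrary.
\item Pass $r_{n+1}$ to a nonforking extension over an $|A|^+$-saturated model, carrying its whole tree along by Proposition \ref{TreeExtProp}.
\item Apply Lemma \ref{EvenBetterBuechler} to get $r_{n+2}$ strictly between $p$ and that extension. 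Let $(\varphi_{n+1},k_{n+1})$ witness this dividing.
\item Make the extension a child of $r_{n+2}$, and also of $p$. This shows both have trees of the next shape, and the earlier data are untouched.
\end{enumerate}
Finally, build the rank-$(\lambda+\omega)$ tree for $p$ piece by piece. For each $n$, take the rank-$(\lambda+n)$ root-child, with everything above it, from $p$'s stage-$n$ tree; these may be different trees for different $n$. Take the root-children of rank $<\lambda$ from the tree of $q$. The resulting shape is chain-like even though the types at the various rank-$(\lambda+n)$ nodes form no chain.
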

\begin{proof}
    By induction on $\alpha$. The only interesting steps are ranks $\lambda+\omega$ where $\lambda$ is a limit or zero. Let us fix some conventions for describing shapes of rank $\lambda+n$. In the case $n=0$ we may simply specify some shape $\sigma$ of rank $\lambda$. To specify a shape of rank $\lambda+n$ for $n\geq 1$, we will write $\langle \sigma, (\varphi_1,k_1),\dots,(\varphi_{n},k_{n})\rangle$ to indicate the shape of rank $\lambda+n$ where the tree above the rank $\lambda$ node has shape $\sigma$, and the node at rank $\lambda+i$ is $(\varphi_i,k_i)$. In order to show that $p$ with SU-rank $\lambda+\omega$ has a rank $\lambda+\omega$ tree of almost uniform shape, it will suffice to show that there is an almost-uniform shape $\sigma$ of rank $\lambda$ and a sequence of dividing data $(\varphi_i,k_i)$ such that for all $n<\omega$, $p$ has a tree of shape $\langle \sigma, (\varphi_1,k_1),\dots,(\varphi_n,k_n)\rangle$. Recall that when checking whether a forking tree is of some specified shape, the root node of the shape is essentially irrelevant, so we will sometimes write ``$p$ has a tree of shape $\langle\sigma, (\varphi_1,k_1),\dots,(\varphi_n,k_n),*\rangle$" to mean that for any choice of dividing data $(\varphi,k)$, $p$ has a tree of shape $\langle\sigma, (\varphi_1,k_1),\dots,(\varphi_n,k_n),(\varphi,k)\rangle$.

    Now we generate the claimed sequence of dividing data by induction. Let $p$ be a type of rank $\lambda+\omega$. After taking a nonforking extension, we may assume $p$ is over a model $M$. $p$ has a forking extension $q$ of rank $\lambda$. After taking a nonforking extension, we may assume $q$ is over an $|M|^+$-saturated model $N$. Apply Lemma \ref{EvenBetterBuechler} to obtain an intermediate forking extension $r_1$ of rank $\lambda+1$. The forking of $q$ over $r_1$ is witnessed by some dividing data $(\varphi_0,k_0)$. \textit{A fortiori} the same data witness the dividing of $q$ over $p$. By the inductive hypothesis, $q$ has a rank $\lambda$ forking tree of almost-uniform shape $\sigma$. Replace the root node of $\sigma$ by $(\varphi_0,k_0)$. At this point, we can see that $p$ and $r_1$ each have a forking tree of shape $\langle \sigma, *\rangle$. Note also that $\mathrm{SU}(r)$ is strictly between $\lambda$ and $\lambda+\omega$. Replace $r_1$ by a nonforking extension to an $|M|^+$-saturated model. By Proposition \ref{TreeExtProp}, $r_1$ still has a tree of shape $\langle \sigma, *\rangle$. Again take a forking extension $r_2$ intermediate to $p$ and $r_1$. Let $(\varphi_1,k_1)$ witness dividing of $r_1$ over $r_2$. Since $r_1$ has a tree of shape $\langle \sigma,*\rangle$, in particular that tree is of shape $\langle \sigma, (\varphi_1,k_1)\rangle$, and we now see that $p,r_2$ both have forking trees of shape $\langle \sigma, (\varphi_1,k_1),*\rangle$.

    Suppose by induction that we have determined $(\varphi_1,k_1),\dots,(\varphi_n,k_n)$ such that $p$ has a forking tree of shape $\langle \sigma, (\varphi_1,k_1),\dots,(\varphi_n,k_n),*\rangle$ as well as a forking extension $r_{n+1}$ which has a tree of shape $\langle \sigma, (\varphi_1,k_1),\dots,(\varphi_n,k_n),*\rangle$. After taking a nonforking extension, we may suppose $r_{n+1}$ is over an $|M|^+$-saturated model. Note that $\lambda+n<\mathrm{SU}(r_{n+1})<\lambda+\omega$. Therefore we can find an intermediate forking extension $r_{n+2}$ of $p$ and data $(\varphi_{n+1},k_{n+1})$ which witness the dividing of $r_{n+1}$ over $r_{n+2}$, hence also over $p$. Again we see that $r_{n+2},p$ each have forking trees of shape $\langle \sigma, (\varphi_{1},k_{1}),\dots,(\varphi_{n+1},k_{n+1}),*\rangle$, completing the induction.
\end{proof}

And now we are able to conclude our main result:
\begin{theorem}\label{FullThm}
    Let $p\in S_T(A)$ be a complete $T$-type of rank $\mathrm{SU}(p)\geq\alpha$. Then there exists a complete $T^+$-type $p^+\in S_{T^+}(A)$ extending $p$ with rank $\mathrm{SU}^+(p^+)\geq \alpha$.
\end{theorem}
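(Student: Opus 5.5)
The plan is to chain together the results of Section \ref{Trees}, following the structure of the proof of Theorem \ref{CountableThm} but using almost uniform trees in place of chain patterns.

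First we reduce to the case where $A$ is relatively $\mathrm{acl}^+$-closed. Let $A':=\mathrm{acl}^+(A)\cap\mathbb{M}^{eq}$ and replace $p$ by a nonforking extension $p'\in S_T(A')$, so that $\mathrm{SU}(p')=\mathrm{SU}(p)\geq\alpha$. At the end we will restrict the resulting $T^+$-type from $\mathrm{acl}^+(A)$ down to $A$. Restriction can only raise $\mathrm{SU}^+$, and the restriction still extends $p$ because $p'$ extends $p$. So it suffices to prove the theorem for $p'$.

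With $A$ relatively $\mathrm{acl}^+$-closed, the argument runs as follows.
\begin{enumerate}
\item Proposition \ref{AlmostUnifExistenceProp} gives $p$ a forking tree of rank $\alpha$ with some almost uniform shape $\sigma$.
\item Proposition \ref{StarProp2} then gives $p$ a $T^+$-robust tree $\tau$ of shape $\sigma$.
\item Proposition \ref{PruneProp} converts $\tau$ into a jointly consistent forking tree of shape $\sigma$, hence of rank $\alpha$, all of whose parameter sets are relatively $\mathrm{acl}^+$-closed. The prerequisite for Proposition \ref{PruneProp} is exactly the hypothesis that $A$ is relatively $\mathrm{acl}^+$-closed.
\item Proposition \ref{GoalProp} now yields a complete $T^+$-type $p^+$ over $\mathrm{acl}^+(A)$ extending $p$ with $\mathrm{SU}^+(p^+)\geq\alpha$.
\item Finally, $p^+\upharpoonright A$ is the required type in $S_{T^+}(A)$.
\end{enumerate}

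The chaining itself is formal, so the points I would check carefully are elsewhere.

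The first is Proposition \ref{StarProp2}. Its successor step takes a nonforking extension to $\mathrm{acl}^+(B)\cap\mathbb{M}^{eq}$. Its limit step uses compactness for the partial type $p(x,A)\cup\pi_\sigma(x,a_I)$. The point is that a realization of this type really produces a robust tree for $p$ itself, and not merely for a type over $A$ that agrees with $p$ in $T$. This holds because $\pi_\sigma$ talks only about $\tp(a/A)$ in $T$, which is fixed to be $p$.

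The second is the ranks. In the robust tree the parameter sets contain $\mathrm{acl}^+$ of the parent's parameters together with the dividing parameter. In the pruned tree of Proposition \ref{PruneProp}, the parameter set of a child $t$ is exactly $\mathrm{acl}^+(\upsilon(t^-)\chi(t))\cap\mathbb{M}^{eq}$, and this is relatively $\mathrm{acl}^+$-closed. The node's parent must also be closed; for the root this is the hypothesis on $A$, and for other nodes it holds by construction.

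The only step I expect to need any real care is the conversion from $T$-forking to $T^+$-forking in Proposition \ref{GoalProp}, through Adler's fact. There one must check that $\mathrm{acl}^+(\upsilon(t))\supseteq\mathrm{acl}^+(\upsilon(t^-))$, so that $\tau^+$ is a genuine chain of extensions. This holds because the parameter sets increase along the tree.
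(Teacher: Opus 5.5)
Your proposal is correct and is essentially the paper's own proof: pass to a nonforking extension over $\mathrm{acl}^+(A)\cap\mathbb{M}^{eq}$, chain Propositions \ref{AlmostUnifExistenceProp}, \ref{StarProp2}, \ref{PruneProp} and \ref{GoalProp}, then restrict from $\mathrm{acl}^+(A)$ down to $A$. The points you flag are exactly what the paper's short argument uses implicitly: relative $\mathrm{acl}^+$-closedness of the root's parameter set for Propositions \ref{PruneProp} and \ref{GoalProp}, and the fact that restricting a type can only raise $\mathrm{SU}^+$.
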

\begin{proof}
    Replace $p$ by a nonforking extension to $\mathrm{acl}^+(A)\cap\mathbb{M}^{eq}$. By Proposition \ref{AlmostUnifExistenceProp}, $p$ has (in $T$) a forking tree of rank $\alpha$ with almost-uniform shape $\sigma$. By Proposition \ref{StarProp2}, $p$ has a $T^+$-robust tree of shape $\sigma$. Then by Propositions \ref{PruneProp} and \ref{GoalProp}, there exists some $p^+$ as claimed.
\end{proof}

\section{References}
\noindent[A] Adler, Hans. Explanation of Independence. arXiv preprint math/0511616 (2005).
\newline
\newline
\noindent[B] Buechler, Steven. Maximal Chains in the Fundamental Order. \textit{The Journal of Symbolic Logic}, vol. 51, no. 2, 1986, pp. 323–26.
\newline
\newline
\noindent[N] N\"ubling, Herwig. Reducts of stable, CM-trivial theories. \textit{The Journal of Symbolic Logic}, vol. 70, no. 4, 2005, pp. 1025–36.
\newline
\newline
\noindent[W] Wagner, Frank. \textit{Simple Theories}. Mathematics and its applications, vol. 503.
Kluwer Academic Publishers, Dordrecht, Boston, and London, 2000.

\end{document}